\definecolor{mygreen}{rgb}{0.05,0.6,0.05}
\newtheorem{thm}{\sc Theorem.}[section]
\newtheorem{lem}{\sc Lemma.}[section]
\newenvironment{AMS}%
{{\upshape\bfseries AMS subject classifications. }\ignorespaces}{}
\newenvironment{keywords}{{\upshape\bfseries Key words. }\ignorespaces}{}
\newcommand{\bR}{\mathbb{R}}
\def\ba{\begin{align}}
\def\ea{\end{align}}
\def\epsilon{\varepsilon} 
\def\hat{\widehat}
\begin{document}
\title{Finite element error analysis for a system coupling surface evolution to diffusion on the surface}
\author{Klaus Deckelnick  \footnotemark[3]\ \and 
        Vanessa Styles \footnotemark[4]}

\renewcommand{\thefootnote}{\fnsymbol{footnote}}
\footnotetext[3] {Institut f{\"u}r Analysis und Numerik,
 Otto-von-Guericke-Universit{\"a}t Magdeburg, 39106 
Magdeburg, Germany}
\footnotetext[4]{Department of Mathematics, University of Sussex, Brighton, BN1 9RF, UK}

\date{}

\maketitle

\begin{abstract}
We consider a numerical scheme for the  approximation of a system that couples the evolution of a two--dimensional 
hypersurface to a reaction--diffusion equation on the surface. The surfaces are assumed to be graphs and
evolve according to forced mean curvature flow. The method uses continuous, piecewise linear finite elements in
space and a backward Euler scheme in time. Assuming the existence of a smooth solution we prove optimal error bounds 
both in $L^\infty(L^2)$ and in $L^2(H^1)$. We present several numerical experiments that confirm our theoretical
findings and apply the method in order to simulate diffusion induced grain boundary motion.
\end{abstract} 

\begin{keywords} 
surface PDE, forced mean curvature flow, diffusion induced grain boundary motion,
finite elements,  error analysis
\end{keywords}

\begin{AMS}  
65M60, 65M15, 35R01
\end{AMS}
\renewcommand{\thefootnote}{\arabic{footnote}}

\section{Introduction} \label{sec:1}

\noindent
In this paper we analyse a finite element scheme for approximating a system which couples diffusion on a surface to an equation that determines the evolution of the surface.
More precisely, we want to find a family of surfaces $(\Gamma(t))_{t \in [0,T]} \subset \mathbb R^3$ and a 
function $w: \bigcup_{t \in [0,T]} \bigl( \Gamma(t) \times \lbrace t \rbrace \bigr) \rightarrow \bR$ such that
\begin{subequations}
\begin{alignat}{2}
V & = H + f(w)   \qquad &&\mbox{on }\Gamma(t), \quad t \in (0,T], \label{Gamma1}\\
\partial^\bullet w  & = \Delta_{\Gamma} w + H \,V\,w + g(V,w)  \qquad &&
\mbox{on }\Gamma(t), \quad t \in (0,T]. \label{Gamma2}
\end{alignat}
\end{subequations}
Here, $V$ and $H$ are the normal velocity and the mean curvature of $\Gamma(t)$ corresponding to
the choice $\nu$ of a unit normal, while $\Delta_{\Gamma}$ denotes the Laplace--Beltrami operator on  $\Gamma(t)$. Furthermore,
$\partial^\bullet w = w_t + V\, \frac{\partial w}{\partial \nu}$ is the material derivative of $w$ and $f:\mathbb R \rightarrow \mathbb R, ~g:\mathbb R^2 \rightarrow \mathbb R$
are given functions. We are particularly interested
in surfaces $\Gamma(t)$ which can be represented as the graph of a function $u: \bar \Omega \times [0,T] \rightarrow \mathbb R$, i.e.
\begin{equation} \label{graph}
\Gamma(t) = \lbrace (x,u(x,t)) \in \mathbb R^3 \, | \, x \in \bar \Omega \rbrace,
\end{equation}
where $\Omega \subset \mathbb R^2$ is a bounded domain with a smooth boundary. Thus, $(\Gamma(t))_{t \in [0,T]}$ is a family of surfaces with boundary, which evolves
according to forced mean curvature flow 
in the cylindrical set $A= \bar \Omega \times \mathbb R$. In what follows we consider the following boundary conditions:
\begin{subequations}
\begin{alignat}{2}
\nu \cdot \nu_{\partial A} &  = 0 \qquad & & \mbox{on } \partial \Gamma(t), \quad t \in (0,T], \label{bc1} \\
w &= 0 \qquad & &\mbox{on } \partial \Gamma(t), \quad  t \in (0,T]. \label{bc2}
\end{alignat}
\end{subequations}
Here, $\nu_{\partial A}$ is the unit outward normal to $\partial A$, so that we assume that the evolving surfaces meet the boundary of the cylinder at a right angle. 
Finally, we impose  the initial conditions
\begin{equation}  \label{Gamma3}
\Gamma(0)=\Gamma^0, \qquad w(\cdot,0) = w^0  \mbox{ on } \Gamma^0,
\end{equation}
where $\Gamma^0=\lbrace (x,u^0(x)) \, | \, x \in \bar \Omega \rbrace$ and $u^0: \bar \Omega \rightarrow \mathbb R$ as well as  $w^0:\Gamma^0 \rightarrow \mathbb R$ are given functions. \\
The system (\ref{Gamma1}), (\ref{Gamma2}) occurs e.g. in the modeling of diffusion induced grain boundary motion, see \cite{FCE01}, \cite{DES01} and Section \ref{digm}. Further examples
of systems that arise by coupling a geometric evolution equation to a PDE on the evolving surface can be found in \cite[Section 10]{DzE13}. \\
A semi--discrete finite element scheme for the approximation of (\ref{Gamma1}), (\ref{Gamma2}) in the case that $\Gamma(t)$ is a closed curve has first been analysed by Pozzi and Stinner 
in \cite{PS17}. Using a tangentially modified parametrisation of the evolving curves, \cite{BDS17} obtains error bounds for a corresponding fully discrete scheme. In
\cite{SVY} this idea is applied to the case of open curves $\Gamma(t)$  meeting a given boundary orthogonally. In each of these papers the error bounds are optimal in $H^1$.
A first error analysis involving the evolution of two--dimensional closed (i.e. compact without boundary) surfaces was obtained in \cite{KLLP17} for a regularized version of (\ref{Gamma1}). Extending ideas used in the error analysis for pure mean curvature flow in \cite{KLL19}, Kov\'{a}cs, Li and Lubich obtain in \cite{KLL20} a convergence proof for the system (\ref{Gamma1}), (\ref{Gamma2}) in the case of closed surfaces.  The scheme  uses polynomials of degree at least two and is based on a system coupling  the variable $w$ in (\ref{Gamma2}) with 
the velocity, the normal and  the mean curvature of $\Gamma(t)$. The error estimates are optimal in $H^1$, while the restriction on the polynomial degree  
is essentially used to guarantee, via inverse estimates, that the discrete surfaces are non--degenerate. \\
The purpose of our paper is to derive and analyse a simple, fully discrete finite element scheme for the system (\ref{Gamma1}), (\ref{Gamma2}) when the evolving surfaces
are of the form (\ref{graph}). In order to translate \eqref{Gamma1}, \eqref{Gamma2} into problems which are posed on $\bar \Omega \times [0,T]$ we introduce
\begin{displaymath}
Q(u)= \sqrt{1+ | \nabla u |^2}.
\end{displaymath}
Then, the upward pointing unit normal $\nu(u)$, the normal velocity $V$ and the mean curvature $H$ of $\Gamma(t)$ are given by 
\begin{equation}  \label{VH}
\nu(u)= \frac{1}{Q(u)}(- \nabla u,1), \quad 
V= \frac{u_t}{Q(u)} \quad \mbox{ and } \quad H= \nabla \cdot \Bigl( \frac{ \nabla u}{Q(u)} \Bigr)
\end{equation}
respectively. 
Furthermore, if we denote by $n$ the outward unit normal to $\partial \Omega$, then
$\nu_{\partial A}=(n,0)$ and hence $\nu(u) \cdot \nu_{\partial A} = -  \frac{ \nabla u \cdot n}{Q(u)}$. 
If we let $\tilde w: \bar \Omega \times [0,T] \rightarrow \mathbb R, \, \tilde w(x,t):= w(x,u(x,t),t)$ then we may write \eqref{Gamma1}, \eqref{bc1} as
\begin{eqnarray} 
\frac{u_t}{Q(u)} -  \nabla \cdot \Bigl( \frac{ \nabla u}{Q(u)} \Bigr) + f(\tilde w) &=& 0 \quad  \mbox{ in } \Omega \times (0,T]; \label{strong} \\
\frac{ \nabla u \cdot n}{Q(u)} & = &  0 \quad \mbox{ on } \partial \Omega \times (0,T].  \label{strongbc}
\end{eqnarray}
Let us next rewrite \eqref{Gamma2} in terms of $\tilde w$. To do so, we make use of the formulae (2.1) and (2.2) in \cite{DzE13}, which yield (temporarily suppressing the dependence
on $t$)
\begin{align}
(\nabla_{\Gamma} w)(\Phi(x)) &= \sum_{i,j=1}^2 g^{ij}(x) \tilde w_{x_j}(x) \Phi_{x_i}(x), \label{tanggrad} \\
(\Delta_{\Gamma} w)(\Phi(x)) & =  \frac{1}{\sqrt{q(x)}} \sum_{i,j=1}^2 \frac{\partial }{\partial x_j} \Bigl( g^{ij}(x) \sqrt{q(x)} \tilde w_{x_i}(x) \Bigr). \label{label}
\end{align}
In the above, $\Phi(x)=(x,u(x))$ and $(g^{ij})$ is the inverse matrix of $(g_{ij})$, where $g_{ij}= \Phi_{x_i} \cdot \Phi_{x_j}= \delta_{ij} + u_{x_i} u_{x_j}, \, i,j=1,2$.
Furthermore, $q=\mbox{det} (g_{ij})= 1+ | \nabla u |^2=Q(u)^2$.
A simple calculation shows that
\begin{displaymath}
(g^{ij})= I- \frac{ \nabla u \otimes \nabla u}{Q(u)^2}.
\end{displaymath}
We can expand the velocity vector $(0,u_t)$ for the evolving family of graphs in terms of $\Phi_{x_1}, \Phi_{x_2}$ and $\nu(u)$ as follows
\begin{displaymath}
(0,u_t)= V \nu(u) + \sum_{k=1}^2 \frac{u_t u_{x_k}}{Q(u)^2} \Phi_{x_k}.
\end{displaymath}
Combining this relation with \eqref{tanggrad} we find
\begin{align*}
\tilde w_t &= w_t + \nabla w \cdot (0,u_t) = w_t +  V \frac{\partial w}{\partial \nu} + \nabla_{\Gamma} w \cdot \sum_{k=1}^2 \frac{u_t u_{x_k}}{Q(u)^2} \Phi_{x_k} \\
&= \partial^\bullet w + \sum_{i,j,k=1}^2 g^{ij} \tilde w_{x_j} \frac{u_t u_{x_k}}{Q(u)^2} \Phi_{x_i} \cdot \Phi_{x_k} 
 = \partial^\bullet w + \frac{u_t}{Q(u)^2} \nabla \tilde w \cdot \nabla u.
\end{align*}
Recalling (\ref{VH}) we deduce that
\begin{displaymath}
\partial^\bullet w - H \, V \,  w   = \tilde w_t - \frac{u_t}{Q(u)} \frac{ \nabla u}{Q(u)} \cdot \nabla \tilde w - \frac{u_t}{Q(u)} 
\nabla \cdot \Bigl( \frac{ \nabla u}{Q(u)} \Bigr) \, \tilde w 
 = \tilde w_t - \frac{u_t}{Q(u)} \nabla \cdot \Bigl( \tilde w \, \frac{ \nabla u}{Q(u)} \Bigr).
\end{displaymath}
Hence,  \eqref{Gamma2}, \eqref{bc2} take the form
\begin{eqnarray} 
\tilde w_t - \frac{1}{Q(u)} \sum_{i,j=1}^2 \Bigl( g^{ij} Q(u) \tilde w_{x_i} \Bigr)_{x_j} = \frac{u_t}{Q(u)} \nabla \cdot \Bigl( \tilde w \, \frac{ \nabla u}{Q(u)} \Bigr)
+ g(V, \tilde w) & & \mbox{ in } \Omega \times (0,T]; \label{strong1} \\
\tilde w  =  0 & &  \mbox{ on } \partial \Omega \times (0,T]. \label{strong1bc}
\end{eqnarray}
For ease of notation we will from now on write again $w$ instead of $\tilde w$. Our discretisation will be  based on a weak formulation
of the system (\ref{strong}), (\ref{strong1})  and uses continuous, piecewise linear finite elements in space and a backward Euler scheme in time, 
see Section \ref{sec:2}. A crucial point in the error analysis
is the  uniform control of the gradient of the discrete height function. This control is achieved with the help of a superconvergence estimate between the discrete height and a 
nonlinear projection previously employed  in \cite{DD00} for the numerical analysis of the mean curvature flow of graphs. The properties of this projection and a suitable 
projection for the function $w$ are collected in Section \ref{sec:3}. As our main results we obtain an $O(\tau+h)$--error bound in $H^1$ and an $O(\tau+ h^2 | \log h |^2)$--estimate
in $L^2$ both for $u$ and $w$, provided that the time step $\tau$  is appropriately related to the mesh size $h$. To  the best of our knowledge, a quasioptimal $L^2$--bound is new for 
coupled systems of the form (\ref{Gamma1}), (\ref{Gamma2}). The proof of the error bounds
is presented in Section \ref{sec:4} and split into two parts: for the analysis of the graph part we shall refer whenever possible to \cite{DD00} in order to keep the
presentation short. The analysis of the surface PDE requires much more work since the estimates have to be carried out in such a way as not to loose the optimal
order. Finally, in Section \ref{sec:5} we present several numerical tests that confirm our error estimates and apply the method in order to 
simulate diffusion induced grain boundary motion. \\
Let us finish the introduction with a few comments on our notation. We shall denote the norm of the Sobolev space 
$W^{m,p}(\Omega) \, (m \in \mathbb N_0, 1 \leq p \leq \infty)$ by $\Vert \cdot \Vert_{m,p}$. For $p=2$,
$W^{m,2}(\Omega)$ will be denoted by $H^m(\Omega)$ with norm $\Vert \cdot \Vert_m$, where we simply write $\Vert \cdot \Vert = \Vert \cdot \Vert_0$.

\section{Weak formulation and finite element approximation} \label{sec:2}
\setcounter{equation}{0}

\noindent
In what follows we make the following assumptions on the data and the solution $(u,w)$: \\
(A1) $f \in C^{0,1}_{\mbox{\footnotesize{loc}}}(\mathbb R)$ and $g:\mathbb R^2 \rightarrow \mathbb R$ has the form
\begin{equation} \label{formg}
g(r,s) = \alpha(r) \, \beta(s) + \tilde \beta(s),
\end{equation}
where $\beta, \tilde \beta \in C^{0,1}_{\mbox{\footnotesize{loc}}}(\mathbb R)$ and $\alpha(r)= \left\{ 
\begin{array}{ll}
\alpha_1 |r|, & r \geq 0, \\
\alpha_2 |r|, & r<0
\end{array}
\right.$ for some $\alpha_1,\alpha_2 \in \mathbb R$. \\[1.2mm]
(A2) $(u,w)$ solves  (\ref{strong}), (\ref{strongbc}), (\ref{strong1}), (\ref{strong1bc}) and satisfies
\begin{align}
& u \in L^\infty((0,T);H^4(\Omega)) \cap L^2((0,T);H^5(\Omega)), \, u_t \in L^\infty((0,T);H^2(\Omega)) \cap L^2((0,T);H^3(\Omega)) \label{regu1}   \\
& \nabla u_t \in L^\infty(\Omega \times (0,T)), \, u_{tt} \in L^\infty((0,T);H^1(\Omega)); \label{regu2} \\
& w \in C^0([0,T];W^{2,\infty}(\Omega)), \, w_t \in C^0([0,T]; W^{1,\infty}(\Omega) \cap H^2(\Omega)), w_{tt} \in L^\infty((0,T);L^2(\Omega)). \label{regw}
\end{align}
\noindent
Multiplying (\ref{strong}) by  $\varphi \in H^1(\Omega)$ and integrating by parts yields the weak formulation
\begin{equation} \label{Gamma1weak}
\int_{\Omega} \frac{u_t \, \varphi}{Q(u)} \,  dx + \int_{\Omega} \frac{\nabla u \cdot \nabla \varphi}{Q(u)} \,  dx = \int_{\Omega} f(w) \, \varphi \, dx 
\qquad \forall \varphi \in H^1(\Omega).
\end{equation}
In order to derive a weak formulation for (\ref{strong1}) we proceed as in \cite[Section 5]{DzE13} and calculate for a test function $\eta \in H^1_0(\Omega)$
\begin{eqnarray}
\lefteqn{ \hspace{-1.5cm}
\frac{d}{dt} \int_{\Omega}  w \, \eta \, Q(u) dx = \int_{\Omega} w_t \, \eta \, Q(u) dx + \int_{\Omega}  w \, \eta \, [Q(u)]_t dx =
 \sum_{i,j=1}^2 \int_{\Omega} \Bigl( g^{ij} w_{x_i} \, Q(u) \Bigr)_{x_j}  \, \eta \, dx } \nonumber  \\
 & &  \quad + \int_{\Omega} u_t \nabla \cdot \Bigl(  w \, \frac{ \nabla u}{Q(u)} \Bigr) \, \eta \,  dx
+ \int_{\Omega}  w \, \eta \, \frac{ \nabla u \cdot \nabla u_t}{Q(u)} dx + \int_{\Omega} g(V, w) \eta \, Q(u) \, dx \nonumber  \\
&=& - \sum_{i,j=1}^2  \int_{\Omega} g^{ij} w_{x_i} \, \eta_{x_j} Q(u) dx - \int_{\Omega} u_t \frac{\nabla u \cdot \nabla \eta}{Q(u)} \, w dx +  \int_{\Omega} 
g(V, w) \, \eta \, Q(u) \, dx  \nonumber  \\
& = &  - \int_{\Omega} E(\nabla u) \nabla w \cdot \nabla \eta dx - \int_{\Omega} \nabla u \cdot \nabla \eta \, V w dx +  \int_{\Omega} 
g(V, w) \, \eta \, Q(u) \, dx, \label{Gamma2weak}
\end{eqnarray}
where $V$ is given by (\ref{VH}) and
\begin{equation} \label{defE}
E(p)= \sqrt{1+ |p|^2} \bigl( I- \frac{p \otimes p}{1+ |p|^2} \bigr), \quad p \in \mathbb R^2.
\end{equation}
Note that for all $p, \xi \in \mathbb R^2$
\begin{equation} \label{ellipt}
E(p) \xi \cdot \xi = \sqrt{1+ |p|^2} \bigl( |\xi |^2 - \frac{(\xi \cdot p)^2}{1+ |p|^2} \bigr) \geq \sqrt{1+|p|^2} \,  | \xi|^2 \bigl( 1- \frac{| p|^2}{1+ |p|^2} \bigr)
= \frac{| \xi|^2}{\sqrt{1+ |p|^2}}.
\end{equation}

\noindent
Next, let $(\mathcal T_h)_{0< h \leq h_0}$ be a family of triangulations of $\Omega$, where we allow boundary elements to have
one curved face in order to avoid the analysis of domain approximation. We denote by $h:=\max_{S \in \mathcal T_h} \mbox{diam}(S)$ the
maximum mesh size and assume that the triangulation is quasiuniform in the sense that there exists $\kappa>0$ which is independent of $h$,
such that each $S \in \mathcal T_h$ is contained in a ball of radius $\kappa^{-1} h$ and contains a ball of radius $\kappa h$. Our finite
element spaces are given by
\begin{displaymath}
X_h = \lbrace \varphi_h \in C^0(\bar \Omega) \, | \, \varphi_h \mbox{ is a linear polynomial on each } S \in \mathcal T_h \rbrace, \quad X_{h0}= X_h \cap H^1_0(\Omega)
\end{displaymath}
with an appropriate modification in the curved elements. We refer to \cite{Z73} for a construction of $X_h$. The following well--known estimates will be useful:
\begin{eqnarray}
 \Vert \nabla \varphi_h \Vert & \leq &  c h^{-1} \Vert \varphi_h \Vert \qquad \quad \quad  \forall \varphi_h \in X_h; \label{invers} \\
 \Vert \nabla \varphi_h \Vert_{0,\infty} & \leq & c h^{-1} \Vert \nabla \varphi_h \Vert \qquad \; \quad \forall \varphi_h \in X_h; \label{invers1} \\
\Vert \varphi_h \Vert_{0,\infty} & \leq & c | \log h|^{\frac{1}{2}} \Vert \varphi_h \Vert_1 \qquad \forall \varphi_h \in X_h.\label{embed}
\end{eqnarray}
Finally, let $\tau>0$ be a time step
and $t_m=m \tau, ~m=0,\ldots,M$, where $M=\frac{T}{\tau}$. In what follows, an upper index $m$ will refer to the time level $m$.  \\[2mm]
Our discretisation reads: Given $u^m_h \in X_h, \, w^m_h \in X_{h0}$,  first find $u^{m+1}_h \in X_h$ such that
\begin{align} 
& \frac{1}{\tau} \int_{\Omega} \frac{(u^{m+1}_h - u^m_h) \, \varphi_h}{Q(u^m_h)} \,  dx + \int_{\Omega} \frac{\nabla u^{m+1}_h \cdot \nabla \varphi_h}{Q(u^m_h)} \,  dx
= \int_{\Omega} f(w^m_h) \, \varphi_h \, dx  \label{disc1} 
\end{align}
for all $\varphi_h \in X_h$. Afterwards, find $w^{m+1}_h \in X_{h0}$ such that
\begin{align}
& \frac{1}{\tau} \Bigl( \int_{\Omega} w^{m+1}_h \, \eta_h \, Q(u^{m+1}_h) dx - \int_{\Omega} w^m_h \, \eta_h Q(u^m_h) dx \Bigr)  +  \int_{\Omega} 
E(\nabla u^{m+1}_h) \nabla w^{m+1}_h \cdot \nabla \eta_h dx \nonumber \\
& \qquad = - \int_{\Omega}  \nabla u^{m+1}_h \cdot \nabla \eta_h \, V^{m+1}_h \,  w^m_h dx +  \int_{\Omega} g(V^{m+1}_h, w^m_h) \, \eta_h \, Q(u_h^{m+1}) \,  dx \label{disc2} 
\end{align}
for all $\eta_h \in X_{h0}$. Here, $V^{m+1}_h=\frac{1}{\tau}\frac{u^{m+1}_h-u^m_h}{Q(u^{m+1}_h)}$. 
We note that each time step requires the consecutive solution of two
linear systems. In view of (\ref{ellipt}) it is easily seen that $u^{m+1}_h \in X_h$
and $w^{m+1}_h \in X_{h0}$ exist and are uniquely determined.
The algorithm is initialised by $u^0_h=\hat u^0_h, w^0_h = \hat w^0_h$, given by (\ref{msproj}) and (\ref{wproj}) defined in the next section. 
\noindent
Our main result reads as follows:
\begin{thm} \label{main} There exist $h_0>0, ~\delta_0>0$ such that for all $0<h \leq h_0$ and all $\tau>0$ satisfying $\tau  \leq \delta_0 h | \log h |^{-\frac{1}{2}}$ the following error bounds hold:
\begin{eqnarray*}
\max_{0 \leq m \leq M} \left[ \Vert u^m - u^m_h \Vert^2 +  \Vert w^m - w^m_h \Vert^2 \right] + \sum_{m=0}^{M-1} \tau 
\Vert u^m_t - \frac{u^{m+1}_h -u^m_h}{\tau} \Vert^2 & \leq & c \bigl( \tau^2 + h^4 | \log h |^4 \bigr), \\
\max_{0 \leq m \leq M} \Vert \nabla ( u^m - u^m_h) \Vert^2 + \sum_{m=0}^M \tau \Vert \nabla (w^m - w^m_h) \Vert^2 & \leq & c \bigl( \tau^2 + h^2 \bigr). 
\end{eqnarray*}
\end{thm}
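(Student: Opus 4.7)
The plan is to prove the two error bounds simultaneously by induction on the time level $m$, using two nonlinear projections (the mean curvature projection $\hat u_h^m$ for the graph function and an elliptic projection $\hat w_h^m$ adapted to the operator $-\nabla\cdot(E(\nabla u)\nabla\,\cdot\,)$ for the surface variable) to split
\begin{equation*}
u^m - u_h^m = (u^m - \hat u_h^m) + (\hat u_h^m - u_h^m) =: \rho_u^m + e_h^m,\qquad w^m - w_h^m =: \rho_w^m + \varepsilon_h^m,
\end{equation*}
where the projection errors $\rho_u^m,\rho_w^m$ have size $O(h^2)$ in $L^2$ and $O(h)$ in $H^1$ by the results of Section \ref{sec:3}. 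The inductive hypothesis at step $m$ will contain the two bounds of the theorem truncated at $m$, and — crucially — the $W^{1,\infty}$ bound $\Vert\nabla u_h^m\Vert_{0,\infty}\le C$ that is needed to give uniform ellipticity constants for $Q(u_h^m)^{-1}$ and $E(\nabla u_h^m)$. This last bound follows from the superconvergence estimate of \cite{DD00} between $u_h^m$ and $\hat u_h^m$ combined with the inverse inequality (\ref{invers1}) and the embedding (\ref{embed}), provided one has the $H^1$ error bound together with the CFL-type condition $\tau\le\delta_0 h|\log h|^{-1/2}$ in the statement.

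For the graph part, I would subtract the scheme (\ref{disc1}) from (\ref{Gamma1weak}) written at $t_{m+1}$, plug in $\varphi_h = e_h^{m+1}$ (or $e_h^{m+1}-e_h^m$ when the time-derivative term is needed), and organize the right-hand side into: consistency terms (backward Euler error plus the projection defect), a Lipschitz term coming from $f(w^m)-f(w_h^m)$, and terms of the form $\int (Q(u^{m+1})^{-1}-Q(u_h^m)^{-1})(\cdots)\,dx$ which are handled by the standard monotonicity/ellipticity arguments of \cite{DD00}. Because the right-hand side contains $f(w^m)-f(w_h^m)$, the estimate does not close on its own: one gets an inequality of the form
\begin{equation*}
\max_{\ell\le m+1}\Vert e_h^\ell\Vert^2 + \tau\sum_{\ell=0}^{m}\Vert \tfrac{e_h^{\ell+1}-e_h^\ell}{\tau}\Vert^2 + \max_{\ell\le m+1}\Vert\nabla e_h^\ell\Vert^2\le c(\tau^2+h^4|\log h|^4) + c\tau\sum_{\ell\le m}\Vert\varepsilon_h^\ell\Vert^2,
\end{equation*}
to be combined with the corresponding bound for $w$.

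For the surface part I would subtract (\ref{disc2}) from (\ref{Gamma2weak}) written at $t_{m+1}$, test with $\eta_h=\varepsilon_h^{m+1}$, and use the coercivity (\ref{ellipt}) (with $|\nabla u_h^{m+1}|$ uniformly bounded by induction) to absorb the main gradient term. The challenging parts are (i) the discrete time derivative $\tau^{-1}(w^{m+1}_h Q(u^{m+1}_h)-w^m_h Q(u^m_h))$, which I would rewrite by adding and subtracting $w_h^{m+1}Q(u_h^m)$ to separate a genuine time-difference of $\varepsilon_h$ from a term proportional to $\tau^{-1}(Q(u_h^{m+1})-Q(u_h^m))w_h^{m+1}$ — the latter is then bounded using a uniform bound on $V_h^{m+1}$ obtained from the $u$-estimate above; (ii) the convective coupling $\int \nabla u_h^{m+1}\cdot\nabla\eta_h\,V_h^{m+1}w_h^m\,dx$, which is dealt with by writing $V^{m+1}-V_h^{m+1}$ in terms of $e_h$ quantities plus a consistency defect and by exploiting the $W^{1,\infty}$ regularity of $w$; and (iii) the nonlinearity $g(V,w)$, whose structure (\ref{formg}) with the linear-growth $\alpha$ is exactly what is needed so that $g(V^{m+1},w^m)-g(V_h^{m+1},w_h^m)$ splits into Lipschitz increments multiplied by bounded factors. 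After Young's inequality this yields
\begin{equation*}
\Vert\varepsilon_h^{m+1}\Vert^2-\Vert\varepsilon_h^m\Vert^2 + c_0\tau\Vert\nabla\varepsilon_h^{m+1}\Vert^2 \le c\tau\bigl(\Vert e_h^{m+1}\Vert^2+\Vert\nabla e_h^{m+1}\Vert^2+\Vert\varepsilon_h^{m+1}\Vert^2\bigr) + c\tau R^{m+1},
\end{equation*}
with $R^{m+1}$ a consistency remainder of size $O(\tau^2+h^4|\log h|^4)$; a discrete Gronwall argument, coupled with the $u$-estimate, then closes the induction for both $L^2$ and $L^2(H^1)$ bounds.

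The main obstacle, as the authors themselves flag, is obtaining an $L^2$ error bound of essentially optimal order $O(\tau+h^2|\log h|^2)$ rather than the naive $O(\tau+h)$ that direct energy testing delivers. I expect this requires: (a) a careful choice of the projection $\hat w_h^m$ so that the $L^2$-projection error is $O(h^2)$ and the consistency defect in the discrete evolution equation is $O(h^2)$ in an appropriate negative norm (using Aubin–Nitsche-type duality against the frozen elliptic operator $-\nabla\cdot(E(\nabla u^{m+1})\nabla\cdot)$ with homogeneous Dirichlet data); (b) avoiding any inverse estimate that would cost a factor $h^{-1}$ when handling nonlinear discrepancies such as $Q(u^{m+1})-Q(u_h^{m+1})$ or $E(\nabla u^{m+1})-E(\nabla u_h^{m+1})$, which forces one to keep these differences inside integrals where they are multiplied by $\nabla\varepsilon_h^{m+1}$ or by projection errors rather than by $\varepsilon_h^{m+1}$ itself; and (c) synchronising the induction so that the uniform $W^{1,\infty}$ bound on $u_h^m$ is always available when estimating the $w$-equation. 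The logarithmic factors and the CFL restriction $\tau\le\delta_0 h|\log h|^{-1/2}$ arise precisely from the inverse/embedding inequalities used in step (c).
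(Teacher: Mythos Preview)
Your overall strategy --- split via the nonlinear projections, run an induction that carries the $W^{1,\infty}$ bound on $\nabla u_h^m$, test the graph equation with an increment and the surface equation with $\varepsilon_h^{m+1}$, then close by discrete Gronwall --- matches the paper. Two points, however, are genuine gaps in your plan.

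First, there is no uniform bound on $V_h^{m+1}$. The $u$--estimate only yields
\[
\frac{1}{\tau}\int_\Omega \frac{(e_u^{m+1}-e_u^m)^2}{Q(u_h^m)}\,dx \ \le\ c\,(\tau^2+h^4|\log h|^4),
\]
so $\Vert V_h^{m+1}-V^{m+1}\Vert$ is only $O(\tau^{-1/2}(\tau+h^2|\log h|^2))$, which blows up as $\tau\to 0$. Consequently, when you handle the term $\tau^{-1}(Q(u_h^{m+1})-Q(u_h^m))$ (your item (i)) and the convective coupling involving $V_h^{m+1}$ (your item (ii)), you cannot discard the increment by a uniform constant: these terms unavoidably leave a contribution of the form $\dfrac{c}{\tau}\Vert e_u^{m+1}-e_u^m\Vert^2$ (no prefactor $\tau$!) on the right-hand side of the $w$--inequality. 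Your displayed inequality for $\varepsilon_h$ with only $c\tau(\cdots)$ on the right is therefore too optimistic and does not close. The paper deals with this by a weighted combination: the induction quantity is $F^m+\tfrac{\beta^2}{2}\int (e_{h,w}^m)^2 Q(u_h^m)\,dx$, one multiplies the $w$--estimate by $\beta^2$ and adds it to the $u$--estimate, and then chooses $\beta$ small so that the offending term $c\beta^2\cdot \tfrac{1}{2\tau}\int (e_u^{m+1}-e_u^m)^2/Q(u_h^m)\,dx$ is absorbed by the positive term on the $u$--side. This $\beta$--mechanism is essential and absent from your outline.

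Second, the optimal $L^2$ order is not obtained by Aubin--Nitsche duality in the paper. Instead, the argument stays at the energy level and uses two elementary but decisive integration-by-parts lemmas (Lemmas~\ref{Fest} and~\ref{Fest1}): whenever a factor $F(\nabla u)-F(\nabla\hat u_h)$ or $\frac{\nabla u}{Q(u)}-\frac{\nabla\hat u_h}{Q(\hat u_h)}$ appears multiplied by a function in $W^{1,1}_0$ or by $f\,\nabla\varphi_h$ with $f$ piecewise $H^2$, one integrates by parts to trade $\nabla\rho_u$ for $\rho_u$ and thereby gain a full power of $h$ (picking up only $|\log h|$). This is how the $B_2$, $B_4$, $B_5$ terms in the surface estimate are pushed to $O(h^2|\log h|)$ rather than $O(h)$. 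A duality approach in your sense would have to cope with the $u_h$--dependent coefficients and the coupling to the nonlinear $u$--equation, and it is not clear it delivers the same result; at any rate it is a different route, and your sketch does not indicate how the linearisation errors would be controlled at the required order.

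A smaller remark: the induction quantity on the $u$--side is not $\Vert\nabla e_h^m\Vert^2$ but the specific functional $F^m=\tfrac12\int|\nu(u_h^m)-\nu(\hat u_h^m)|^2 Q(u_h^m)\,dx-\int d^m\!\cdot\!\nabla e_{h,u}^m\,\rho_u^m\,dx$ from \cite{DD00}; the correction term is exactly what makes the increment estimate close without losing a power of $h$.
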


\section{Projections} \label{sec:3}
\setcounter{equation}{0}

Our error analysis relies on the use of suitable  Ritz projections of the solutions $u$ and $w$. Omitting the time dependence for a moment we
define for a given function $u \in  H^1(\Omega)$ the minimal surface type projection $\hat u_h \in X_h$ by
\begin{equation} \label{msproj}
\int_{\Omega} \frac{\nabla \hat u_h \cdot \nabla \varphi_h}{Q(\hat u_h)} \,  dx + \int_{\Omega} \hat u_h \, \varphi_h \, dx =
\int_{\Omega} \frac{\nabla u \cdot \nabla \varphi_h}{Q(u)} \,  dx + \int_{\Omega} u \, \varphi_h \, dx \qquad \forall \varphi_h \in X_h.
\end{equation}
Note that we have added the zero order term in order to ensure the $H^1(\Omega)$--coercivity of the problem. For functions that
also depend on $t$ we have the following error bounds.
\begin{lem} \label{proju} Assume that $u$ satisfies (\ref{regu1}) and (\ref{regu2}). Then
\begin{eqnarray} 
\sup_{0 \leq t \leq T}\Vert (u - \hat u_h)(t) \Vert + h \sup_{0 \leq t \leq T} \Vert \nabla (u - \hat u_h)(t) \Vert & \leq & c h^2, \label{ms1} \\
\sup_{0 \leq t \leq T} \Vert (u - \hat u_h)(t) \Vert_{0,\infty} + h \sup_{0 \leq t \leq T} \Vert \nabla (u - \hat u_h)(t) \Vert_{0,\infty} & \leq & c h^2 | \log h |, \label{ms2} \\
\sup_{0 \leq t \leq T} \Vert (u_t - \hat u_{h,t})(t) \Vert & \leq & c h^2 | \log h|^2, \label{ms3} \\
\sup_{0 \leq t \leq T} \Vert \nabla (u_t - \hat u_{h,t})(t) \Vert & \leq & c h. \label{ms4}
\end{eqnarray}
\end{lem}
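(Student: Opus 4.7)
The plan is to follow the template of \cite{DD00}, which analyses exactly the same minimal surface Ritz projection. Existence and uniqueness of $\hat u_h$ are immediate from strict monotonicity of the operator $v \mapsto -\nabla\!\cdot\!\bigl(\nabla v/Q(v)\bigr)+v$, which is $H^1$-coercive thanks to the added zero-order term. The four bounds fall into two groups: (\ref{ms1})–(\ref{ms2}) concern the static (pointwise-in-$t$) projection and require only that $u(\cdot,t) \in H^4(\Omega)$ with $|\nabla u(\cdot,t)|$ bounded, both of which follow from (\ref{regu1}); (\ref{ms3})–(\ref{ms4}) are bounds for $\hat u_{h,t}$, obtained by differentiating (\ref{msproj}) in $t$.

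For the static part, I would first establish, via a bootstrap argument combining a preliminary $H^1$ estimate with the inverse inequality (\ref{invers1}), a uniform bound $\Vert \nabla \hat u_h \Vert_{0,\infty}\leq C$. With this in hand, integration along the segment joining $\nabla u$ and $\nabla \hat u_h$ gives the monotonicity estimate
\[
\Bigl(\frac{\nabla u}{Q(u)}-\frac{\nabla \hat u_h}{Q(\hat u_h)}\Bigr)\cdot\nabla(u-\hat u_h)\geq c\,|\nabla(u-\hat u_h)|^2
\]
with $c>0$ independent of $h$. Testing Galerkin orthogonality with $\varphi_h=I_h u-\hat u_h$ and using standard Lagrange interpolation estimates then yields the $H^1$ part of (\ref{ms1}), and an Aubin--Nitsche duality argument based on the linearisation $B(p):=Q(p)^{-1}\bigl(I-p\otimes p/Q(p)^2\bigr)$ (averaged along the segment) delivers the $L^2$ part. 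The $L^\infty$ estimates in (\ref{ms2}) come from the discrete Green's function analysis for the linearised form $\int B(\cdot)\nabla\varphi\cdot\nabla\psi+\int\varphi\psi$ on a quasiuniform mesh; the $|\log h|$ factor is the usual loss for piecewise linear elements in two dimensions.

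For (\ref{ms3})–(\ref{ms4}), I would differentiate (\ref{msproj}) with respect to $t$ to obtain the linear identity
\[
\int_\Omega B(\nabla \hat u_h)\nabla\hat u_{h,t}\cdot\nabla\varphi_h\,dx+\int_\Omega \hat u_{h,t}\varphi_h\,dx=\int_\Omega B(\nabla u)\nabla u_t\cdot\nabla\varphi_h\,dx+\int_\Omega u_t\,\varphi_h\,dx
\]
for all $\varphi_h\in X_h$, together with the corresponding continuous identity satisfied trivially by $u_t$. Subtracting, $\hat u_{h,t}$ appears as a perturbed linear Galerkin approximation of $u_t$ with respect to the bilinear form induced by $B(\nabla u)$, the perturbation being driven by $\bigl(B(\nabla u)-B(\nabla \hat u_h)\bigr)\nabla u_t$. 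A C\'ea-type estimate for this linear problem combined with $\Vert\nabla(u-\hat u_h)\Vert_{0,\infty}\leq ch|\log h|$ from (\ref{ms2}) and the $L^\infty$ bound on $\nabla u_t$ from (\ref{regu2}) gives (\ref{ms4}), while the duality argument, which picks up a second factor of $|\log h|$ in bounding the coefficient perturbation, yields (\ref{ms3}).

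The main obstacle is the uniform a priori bound $\Vert\nabla\hat u_h\Vert_{0,\infty}\leq C$: without it the monotonicity constant and the ellipticity constant of $B(\nabla\hat u_h)$ degenerate, and every subsequent estimate collapses. Once this bootstrap is closed along the lines of \cite{DD00}, the remaining work is a careful but essentially standard bookkeeping exercise that keeps track of where each $|\log h|$ factor is produced.
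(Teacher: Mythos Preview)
Your outline matches the paper's approach: the static bounds (\ref{ms1})--(\ref{ms2}) are quoted from the nonlinear elliptic projection literature (the paper cites \cite{DR80} rather than \cite{DD00}), and (\ref{ms3})--(\ref{ms4}) are obtained by differentiating (\ref{msproj}) in time and treating the resulting linear problem, as in \cite{DD95}. Two small corrections are worth noting. First, for (\ref{ms4}) the perturbation term $\bigl(B(\nabla u)-B(\nabla\hat u_h)\bigr)\nabla u_t$ should be estimated in $L^2$ by pairing $\Vert\nabla(u-\hat u_h)\Vert\leq ch$ from (\ref{ms1}) with $\Vert\nabla u_t\Vert_{0,\infty}$; your stated pairing, which uses the $L^\infty$ bound (\ref{ms2}) on $\nabla(u-\hat u_h)$, would leave a spurious $|\log h|$ factor and not give the clean $O(h)$ claimed. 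Second, the paper explicitly points out that \cite{DD95} treats Dirichlet boundary conditions, so for (\ref{ms3}) the dual problem must be modified to the Neumann form
\[
-\nabla\cdot\bigl(F'(\nabla u)\nabla v\bigr)+v=u_t-\hat u_{h,t}\ \mbox{ in }\Omega,\qquad F'(\nabla u)\nabla v\cdot n=0\ \mbox{ on }\partial\Omega,
\]
with $F(p)=p/\sqrt{1+|p|^2}$; you do not mention this adaptation, but it is needed here.
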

\begin{proof} The proofs of (\ref{ms1}) and (\ref{ms2}) follow from \cite{DR80} (see p. 160) using that $u(\cdot,t) \in H^4(\Omega) \subset W^{2,\infty}(\Omega)$ for
every $t \in [0,T]$. The arguments required to show   (\ref{ms3}) and (\ref{ms4}) can be found in 
\cite[Section 4]{DD95} for the case of homogeneous Dirichlet boundary conditions. In order to prove (\ref{ms4}) for  (\ref{msproj}) one proceeds in the same way as in 
\cite{DD95}, p. 202 to obtain
\begin{displaymath}
\Vert \nabla (u_t - \hat u_{h,t}) \Vert^2 \leq c h  \Vert \nabla (u_t - \hat u_{h,t}) \Vert \bigl( \Vert \nabla u_t \Vert_{0,\infty} + \Vert u_t \Vert_2 \bigr) + c h^2
\Vert \nabla u_t \Vert_{0,\infty} \Vert u_t \Vert_2,
\end{displaymath}
which yields (\ref{ms4}) taking into account (\ref{regu1}) and (\ref{regu2}). The bound (\ref{ms3})  can be shown  for the Neumann case by modifying the 
dual problem on top of p. 203 in \cite{DD95} as follows:
\begin{displaymath}
- \nabla \cdot \bigl( F'(\nabla u) \nabla v \bigr) + v = u_t - \hat u_{h,t} \quad \mbox{ in } \Omega, \quad F'(\nabla u) \nabla v \cdot n = 0 \quad 
\mbox{ on } \partial \Omega,
\end{displaymath}
where $F(p)=p/\sqrt{1+|p|^2}, \, p \in \mathbb R^2$.
\end{proof}

\noindent
Let us next use $\hat u_h$ in order to define a projection $\hat w_h \in X_{h0}$ of $w$ as follows:
\begin{equation} \label{wproj}
\int_{\Omega} E(\nabla \hat u_h) \nabla \hat w_h \cdot \nabla \eta_h \, dx = \int_{\Omega} E(\nabla u) \nabla w \cdot \nabla \eta_h \, dx \quad \quad 
\forall \eta_h \in X_{h0}.
\end{equation}
\begin{lem} \label{projw} Assume that $w$ satisfies (\ref{regw}). Then
\begin{eqnarray}
\sup_{0 \leq t \leq T} \Vert \nabla( w - \hat w_h)(t) \Vert & \leq &  c h, \label{w1} \\
\sup_{0 \leq t \leq T} \Vert (w - \hat w_h)(t) \Vert & \leq & c h^2 | \log h |, \label{w2} \\
\sup_{0 \leq t \leq T} \Vert \nabla (w_t - \hat w_{h,t})(t) \Vert & \leq & c h, \label{w3} \\
\sup_{0 \leq t \leq T} \Vert (w_t - \hat w_{h,t})(t) \Vert & \leq & c h^2 | \log h|^2. \label{w4}
\end{eqnarray}
\end{lem}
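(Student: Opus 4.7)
My plan is to treat (\ref{wproj}) as a perturbed elliptic projection with coercive but $h$-dependent bilinear form $\mathcal{A}_h(\chi,\eta) := \int_\Omega E(\nabla \hat u_h)\nabla\chi\cdot\nabla\eta\,dx$. Since $\|\nabla u\|_{0,\infty}$ is bounded by (\ref{regu1}) and $\|\nabla(u-\hat u_h)\|_{0,\infty}\leq ch|\log h|$ by (\ref{ms2}), $\nabla \hat u_h$ stays in a fixed compact set of $\mathbb{R}^2$ for $h$ small, and (\ref{ellipt}) then supplies a uniform coercivity constant for $\mathcal{A}_h$ on $X_{h0}$. For (\ref{w1}), subtracting $\mathcal{A}_h(I_h w,\eta_h)$ from both sides of (\ref{wproj}) yields
\begin{equation*}
\mathcal{A}_h(\hat w_h - I_h w,\eta_h) = \int_\Omega \bigl(E(\nabla u) - E(\nabla \hat u_h)\bigr)\nabla w\cdot\nabla\eta_h\,dx + \mathcal{A}_h(w - I_h w,\eta_h).
\end{equation*}
Testing with $\eta_h = \hat w_h - I_h w$, bounding the first right-hand side term by $c\|\nabla(u-\hat u_h)\|\,\|\nabla w\|_{0,\infty}\,\|\nabla\eta_h\|\leq ch\,\|\nabla\eta_h\|$ via the local Lipschitz continuity of $E$, (\ref{ms1}) and $w\in W^{2,\infty}$, and the second by the standard interpolation estimate $\|\nabla(w-I_h w)\|\leq ch$, gives (\ref{w1}) via the triangle inequality.

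The main obstacle is (\ref{w2}), which I would attack by Aubin--Nitsche duality. Let $\phi\in H^2(\Omega)\cap H^1_0(\Omega)$ solve $-\nabla\cdot(E(\nabla u)\nabla\phi)=w-\hat w_h$ with $\|\phi\|_2\leq c\|w-\hat w_h\|$ (which holds since $E(\nabla u)\in W^{1,\infty}$ by (\ref{regu1}) and $\partial\Omega$ is smooth). Setting $e:=w-\hat w_h\in H^1_0(\Omega)$, testing the dual equation against $e$, using the symmetry of $E$, and invoking (\ref{wproj}) with $\eta_h=I_h\phi$, I arrive at
\begin{equation*}
\|e\|^2 = \int_\Omega E(\nabla u)\nabla e\cdot\nabla(\phi - I_h\phi)\,dx + \int_\Omega \bigl(E(\nabla \hat u_h) - E(\nabla u)\bigr)\nabla \hat w_h\cdot\nabla I_h\phi\,dx.
\end{equation*}
The first term is $O(h^2\|e\|)$ via (\ref{w1}) and interpolation. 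In the second, I split $\nabla I_h\phi = \nabla\phi - \nabla(\phi - I_h\phi)$ and $\nabla \hat w_h = \nabla w - \nabla e$; three of the four resulting integrals each carry a small factor $\|\nabla(\phi-I_h\phi)\|\leq ch\|e\|$ or $\|\nabla e\|\leq ch$, and are controlled by the Lipschitz bound on $E$ combined with (\ref{ms1}) or (\ref{ms2}). The genuinely delicate piece is $\int_\Omega (E(\nabla \hat u_h)-E(\nabla u))\nabla w\cdot\nabla\phi\,dx$, where all the ``non-perturbation'' factors are smooth and I must extract an extra power of $h$ out of a single $\nabla(u-\hat u_h)$. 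For this I Taylor-expand $E(\nabla \hat u_h) - E(\nabla u) = DE(\nabla u)\nabla(\hat u_h - u) + R$ with $|R|\leq c|\nabla(u-\hat u_h)|^2$; the $R$-contribution is $O(h^2|\log h|\,\|e\|)$ via $\|\nabla(u-\hat u_h)\|_{0,\infty}\cdot\|\nabla(u-\hat u_h)\|$. The linear contribution is integrated by parts against $\nabla(u-\hat u_h)$. Since $\phi$ vanishes on $\partial\Omega$ but $u-\hat u_h$ does not, a surface integral remains: the interior term is bounded by $\|u-\hat u_h\|\,\|\phi\|_2\leq ch^2\|e\|$ (the relevant divergence having $L^2$-norm of order $\|\phi\|_2$, using that $u,w\in W^{2,\infty}$ and $\phi\in H^2$), and the surface term by $\|u-\hat u_h\|_{L^\infty(\partial\Omega)}\,\|\nabla\phi\|_{L^2(\partial\Omega)}\leq ch^2|\log h|\,\|e\|$ via the trace theorem and (\ref{ms2}). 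Combining everything gives (\ref{w2}).

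For (\ref{w3}) and (\ref{w4}) I would differentiate (\ref{wproj}) in $t$: $\hat w_{h,t}\in X_{h0}$ is the unique solution (coercivity) of
\begin{align*}
\int_\Omega E(\nabla \hat u_h)\nabla \hat w_{h,t}\cdot\nabla\eta_h\,dx
&= \int_\Omega E(\nabla u)\nabla w_t\cdot\nabla\eta_h\,dx + \int_\Omega \bigl(DE(\nabla u)\nabla u_t\bigr)\nabla w\cdot\nabla\eta_h\,dx \\
&\quad - \int_\Omega \bigl(DE(\nabla \hat u_h)\nabla \hat u_{h,t}\bigr)\nabla \hat w_h\cdot\nabla\eta_h\,dx
\end{align*}
for all $\eta_h\in X_{h0}$. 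This has the same structure as (\ref{wproj}) with two extra ``lower-order'' right-hand sides encoding $\partial_t E$. Re-running the C\'ea-type argument of the first paragraph with $\eta_h = \hat w_{h,t} - I_h w_t$, and estimating the new contributions via $\|\nabla(u_t-\hat u_{h,t})\|\leq ch$ from (\ref{ms4}), $\|\nabla(w-\hat w_h)\|\leq ch$ from (\ref{w1}), and the $L^\infty$-bounds for $\nabla u_t,\nabla w$ from (\ref{regu2}), (\ref{regw}), yields (\ref{w3}). Finally, (\ref{w4}) is obtained by re-running the duality argument of the second paragraph on this differentiated equation; the extra $|\log h|$ factor in the final bound $ch^2|\log h|^2$ comes from replacing the $O(h^2)$ bound for $\|u-\hat u_h\|$ by the weaker estimate $\|u_t-\hat u_{h,t}\|\leq ch^2|\log h|^2$ from (\ref{ms3}) whenever such an $L^2$-norm enters the estimates of the new RHS contributions.
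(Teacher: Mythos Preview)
Your approach is correct and is precisely the route the paper takes by deferring to the Appendix of \cite{DD06}: uniform coercivity of $\mathcal A_h$ from (\ref{ellipt}) together with the boundedness of $\nabla\hat u_h$, a C\'ea argument for (\ref{w1}), an Aubin--Nitsche duality argument for (\ref{w2}), and then differentiation of (\ref{wproj}) in $t$ followed by the same two steps for (\ref{w3}) and (\ref{w4}). One small point to watch in (\ref{w4}): your device of bounding the surface integral by $\Vert u-\hat u_h\Vert_{L^\infty(\partial\Omega)}$ does not transfer verbatim to the new contribution involving $\nabla\rho_{u,t}$, since Lemma~\ref{proju} gives no $L^\infty$-control on $\rho_{u,t}$; however, the corresponding boundary integrand reduces on $\partial\Omega$ (using $w=\psi=0$ there) to $\rho_{u,t}\,(\partial_n w)(\partial_n\psi)\sum_{i,j,k}\partial_{p_k}E_{ij}(\nabla u)\,n_i n_j n_k$, and a direct computation with (\ref{defE}) shows the triple sum equals $-\frac{\nabla u\cdot n}{Q(u)}+\frac{(\nabla u\cdot n)^3}{Q(u)^3}$, which vanishes by (\ref{strongbc}), so the argument closes as you intend.
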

\begin{proof} Using (\ref{ms1})--(\ref{ms4}), these bounds have been obtained in \cite[Appendix]{DD06} for a slightly more complicated projection, see (2.22) in that paper.
The same arguments can be applied to our case where we note that the matrix valued function $E(p)$ used in \cite{DD06} differs from (\ref{defE}) by
a factor of $1+|p|^2$. However, since $\nabla u$ and $\nabla \hat u_h$ vary in a bounded set that is independent of $h$, the analysis in \cite{DD06}
also applies to (\ref{wproj}).
\end{proof}

\noindent
For later use we record the following estimates, which will be helpful in retaining the optimality of the error bounds:
\begin{lem} \label{Fest}  Suppose that $F: \mathbb R^2 \rightarrow \mathbb R$ is twice continuously differentiable and that $u \in W^{2,\infty}(\Omega)$. Then
we have for $f \in W^{1,1}_0(\Omega)$
\begin{displaymath}
| \int_{\Omega} \bigl( F(\nabla u) - F(\nabla \hat u_h) \bigr) \, f \, dx | \leq c h^2 | \log h | \, \Vert f \Vert_{1,1}.
\end{displaymath}
\end{lem}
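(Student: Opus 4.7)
The direct pointwise bound $|F(\nabla u)-F(\nabla \hat u_h)| \leq c\,\Vert\nabla(u-\hat u_h)\Vert_{0,\infty}$ combined with (\ref{ms2}) yields only $O(h|\log h|)\,\Vert f\Vert_{0,1}$, which is one power of $h$ short of the claim. My plan is to recover the missing factor by a first-order Taylor expansion of $F$ around $\nabla u$: write
\begin{equation*}
F(\nabla u) - F(\nabla \hat u_h) = DF(\nabla u)\cdot\nabla(u-\hat u_h) + R,
\end{equation*}
where the $C^2$-regularity of $F$ together with the uniform boundedness of $\nabla u$ (from $u\in W^{2,\infty}$) and of $\nabla \hat u_h$ (via (\ref{ms2})) gives $|R|\leq c\,|\nabla(u-\hat u_h)|^2$ with $c$ independent of $h$. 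I then treat the two contributions separately.

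For the linear term I would exploit that $f\in W^{1,1}_0(\Omega)$ in order to integrate by parts without boundary contributions,
\begin{equation*}
\int_\Omega DF(\nabla u)\cdot\nabla(u-\hat u_h)\,f\,dx = -\int_\Omega (u-\hat u_h)\bigl[\,\nabla\!\cdot\!DF(\nabla u)\,f + DF(\nabla u)\cdot\nabla f\,\bigr] dx.
\end{equation*}
Since $u\in W^{2,\infty}(\Omega)$, both the coefficient $DF(\nabla u)$ and its divergence $D^2F(\nabla u){:}D^2u$ are bounded in $L^\infty$, so this expression is controlled by $c\,\Vert u-\hat u_h\Vert_{0,\infty}\,\Vert f\Vert_{1,1}$; applying (\ref{ms2}) produces exactly $c\,h^2|\log h|\,\Vert f\Vert_{1,1}$.

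For the quadratic remainder I would split the two copies of $|\nabla(u-\hat u_h)|$ in $|R|$ asymmetrically and then apply Cauchy--Schwarz together with the two-dimensional Sobolev embedding $W^{1,1}_0(\Omega)\hookrightarrow L^2(\Omega)$:
\begin{equation*}
\Bigl|\int_\Omega R\,f\,dx\Bigr| \leq c\,\Vert\nabla(u-\hat u_h)\Vert_{0,\infty}\,\Vert\nabla(u-\hat u_h)\Vert\,\Vert f\Vert \leq c\,h|\log h|\cdot h\cdot\Vert f\Vert_{1,1},
\end{equation*}
by (\ref{ms1}) and (\ref{ms2}). Summing the two contributions then gives the claimed estimate. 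The main obstacle I anticipate is distributing the logarithmic factor correctly: bounding both factors of $|\nabla(u-\hat u_h)|$ in the $R$-term by the $L^\infty$ estimate (\ref{ms2}) would introduce a superfluous $|\log h|^2$, so it is essential to pair one pointwise bound with the sharper $L^2$ bound (\ref{ms1}); the two-dimensional Sobolev embedding $W^{1,1}_0\hookrightarrow L^2$ is precisely what converts $\Vert f\Vert$ into the $\Vert f\Vert_{1,1}$-norm appearing in the statement.
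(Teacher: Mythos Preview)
Your proposal is correct and follows essentially the same route as the paper: Taylor expansion around $\nabla u$, integration by parts on the linear term using $f\in W^{1,1}_0(\Omega)$ together with the $L^\infty$ bound (\ref{ms2}) on $\rho_u$, and the asymmetric $L^\infty$/$L^2$ splitting of the quadratic remainder combined with the embedding $W^{1,1}(\Omega)\hookrightarrow L^2(\Omega)$. The paper writes the remainder in integral form rather than via the pointwise quadratic bound, but the resulting estimate is identical.
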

\begin{proof} Abbreviating $\rho_u=u-\hat u_h$ we have
\begin{displaymath}
 \int_{\Omega} \bigl( F(\nabla u) - F(\nabla \hat u_h) \bigr) \, f \, dx = \int_{\Omega} F'(\nabla u) \cdot \nabla \rho_u \, f \, dx + R,
\end{displaymath}
where
\begin{displaymath}
| R | = \big| \int_{\Omega} \int_0^1 \bigl( F'(\nabla u - s \nabla \rho_u) - F'(\nabla u) \bigr) ds \cdot \nabla \rho_u \, f \, dx \big| \leq c \Vert \nabla \rho_u \Vert_{0,\infty}
\Vert \nabla \rho_u \Vert \Vert f \Vert \leq c h^2 | \log h | \Vert f \Vert_{1,1}
\end{displaymath}
in view of (\ref{ms1}) and (\ref{ms2}) and the embedding $W^{1,1}(\Omega) \hookrightarrow L^2(\Omega)$. Integration by parts together with (\ref{ms2})
yields
\begin{displaymath}
 \big| \int_{\Omega} F'(\nabla u) \cdot \nabla \rho_u \, f \, dx  \big|  =    \big| - \int_{\Omega} \nabla \cdot \bigl( F'(\nabla u) \, f \bigr) \rho_u \, dx  \big|
 \leq  c \Vert \rho_u \Vert_{0,\infty} \Vert f \Vert_{1,1} \leq c h^2 | \log h |  \Vert f \Vert_{1,1}
\end{displaymath}
and the result follows.
\end{proof}

\begin{lem} \label{Fest1} Suppose that $f \in H^1_0(\Omega)\cap C^0(\bar \Omega)$ with $f \in H^2(T)$ for all $T \in \mathcal T_h$. Then
\begin{displaymath}
 \big| \int_{\Omega} f  \bigl( \frac{\nabla u}{Q(u)} - \frac{\nabla \hat u_h}{Q(\hat u_h)} \bigr) \cdot \nabla \varphi_h \, dx  \big| \leq  
c h | \log h | \Vert \varphi_h \Vert   \Bigl( \sum_{T \in \mathcal T_h} \Vert f \Vert_{H^2(T)}^2  \Bigr)^{\frac{1}{2}} \qquad \forall \varphi_h \in X_h.
\end{displaymath}
If in addition, $f \in H^2(\Omega)$, then 
\begin{displaymath}
 \big| \int_{\Omega} f \bigl( \frac{\nabla u}{Q(u)} - \frac{\nabla \hat u_h}{Q(\hat u_h)} \bigr) \cdot \nabla \varphi_h \, dx  \big| \leq  
c h^2 | \log h | \Vert \varphi_h \Vert_1 \Vert f \Vert_2 \qquad \forall \varphi_h \in X_h.
\end{displaymath} 
\end{lem}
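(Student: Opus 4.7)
Write $G := \frac{\nabla u}{Q(u)} - \frac{\nabla \hat u_h}{Q(\hat u_h)} = F(\nabla u) - F(\nabla \hat u_h)$ with $F(p)=p/Q(p)$, and set $\rho_u := u - \hat u_h$. Subtracting the two sides of \eqref{msproj} gives the Galerkin orthogonality
\[
\int_\Omega G\cdot\nabla\psi_h\,dx = -\int_\Omega \rho_u\,\psi_h\,dx \qquad \forall\,\psi_h \in X_h,
\]
which, together with the $L^\infty$-superconvergence $\|\rho_u\|_{0,\infty}\le ch^2|\log h|$ and the Lipschitz-derived bound $\|G\|_{0,\infty}\le ch|\log h|$ (both consequences of \eqref{ms2}), will be the backbone of the argument. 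The plan is to split via the product rule $f\nabla\varphi_h = \nabla(f\varphi_h) - \varphi_h\nabla f$ into
\[
\int_\Omega f G\cdot\nabla\varphi_h\,dx = \int_\Omega G\cdot\nabla(f\varphi_h)\,dx \;-\; \int_\Omega \varphi_h G\cdot\nabla f\,dx,
\]
and to estimate each piece separately.

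For the first stated estimate, the second integral above is controlled immediately by Cauchy--Schwarz: $|\int \varphi_h G \cdot \nabla f|\le \|G\|_{0,\infty}\|\varphi_h\|\|\nabla f\|\le ch|\log h|\|\varphi_h\|(\sum_T\|f\|_{H^2(T)}^2)^{1/2}$, which already attains the claimed order. In the first integral, since $f\varphi_h\in H^1_0(\Omega)$, I would pick a Scott--Zhang quasi-interpolant $\Psi_h\in X_{h,0}$ of $f\varphi_h$ and decompose $\int G\cdot\nabla(f\varphi_h) = \int G\cdot\nabla\Psi_h + \int G\cdot\nabla\eta$ with $\eta = f\varphi_h - \Psi_h$. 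The orthogonality reduces the first of these to $-\int \rho_u \Psi_h$, bounded by $\|\rho_u\|\|\Psi_h\|$; the norm $\|\Psi_h\|$ is in turn controlled through the Gagliardo--Nirenberg estimate $\|\varphi_h\|_{L^4}\le c\|\varphi_h\|^{1/2}\|\nabla\varphi_h\|^{1/2}$ combined with $\|f\|_{L^4}\le c\|f\|_{H^1}\le cC$ in 2D, giving $\|\Psi_h\|\le ch^{-1/2}\|\varphi_h\|(\sum_T\|f\|_{H^2(T)}^2)^{1/2}$ and hence the desired order. The remaining integral $\int G\cdot\nabla\eta$ is treated by element-wise integration by parts: since $F(\nabla\hat u_h)$ is constant on every triangle, its piecewise divergence vanishes, leaving only a volume contribution $\int(\nabla\cdot F(\nabla u))\eta\,dx$ and an interior-edge contribution weighted by the jump $[F(\nabla\hat u_h)\cdot n_e]$ of size at most $c\|\nabla\rho_u\|_{0,\infty}$; these are then controlled via trace and interpolation estimates for $\eta$ and the superconvergence of $\rho_u$.

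For the sharper second inequality, where $f\in H^2(\Omega)$, the Cauchy--Schwarz bound for $\int\varphi_h G\cdot\nabla f$ only delivers $ch|\log h|\|\varphi_h\|_1\|f\|_2$, losing one factor of $h$. I would recover it by invoking Lemma \ref{Fest} component-wise with test function $g = \varphi_h\partial_i f$ (using that $\partial_i f \in H^1(\Omega)$); since $g$ does not in general vanish on $\partial\Omega$, the integration by parts in the proof of Lemma \ref{Fest} picks up a boundary integral, which I estimate by $\|\rho_u\|_{0,\infty}\|\varphi_h\partial_i f\|_{L^1(\partial\Omega)}\le ch^2|\log h|\|\varphi_h\|_1\|f\|_2$ via the traces $\|\partial_i f\|_{L^2(\partial\Omega)}\le c\|f\|_2$ and $\|\varphi_h\|_{L^2(\partial\Omega)}\le c\|\varphi_h\|_1$. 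An analogous component-wise treatment of $\int G\cdot\nabla(f\varphi_h)$ yields the corresponding improvement there.

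The main obstacle lies in the interpolation-error integral $\int G\cdot\nabla\eta$ and, more generally, in the edge terms arising from element-wise integration by parts. Their size is governed by the jumps $|[\nabla\hat u_h]|$ and $|[\partial_n\varphi_h]|$ together with trace norms of $f$ and $\eta$; to retain the single logarithmic factor in the stated bounds one has to avoid the cruder inverse estimates and use instead precisely calibrated $L^4$-Gagliardo--Nirenberg and anisotropic trace inequalities that play off the superconvergence $\|\rho_u\|_{0,\infty}\sim h^2|\log h|$ against the factor $h^{-1}$ produced when passing between $\|\nabla\varphi_h\|$ and $\|\varphi_h\|$ (respectively $\|\varphi_h\|_1$).
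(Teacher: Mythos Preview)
Your architecture is the same as the paper's: product-rule split $f\nabla\varphi_h=\nabla(f\varphi_h)-\varphi_h\nabla f$, Galerkin orthogonality applied to a discrete approximation of $f\varphi_h$, and a separate treatment of $\int_\Omega\varphi_h\,G\cdot\nabla f$ (directly for the first bound, via Lemma~\ref{Fest} for the second). Where you diverge is in handling the interpolation-error integral $\int_\Omega G\cdot\nabla\eta$, which you identify as the main obstacle and propose to attack by element-wise integration by parts, edge jumps, trace inequalities and Gagliardo--Nirenberg. The paper avoids all of this. It uses the Lagrange interpolant $I_h(f\varphi_h)$ (legitimate because $f\varphi_h\in C^0(\bar\Omega)$, so Scott--Zhang is unnecessary) and bounds directly
\[
\Big|\int_\Omega G\cdot\nabla\bigl(f\varphi_h-I_h(f\varphi_h)\bigr)\,dx\Big|
\le\Vert G\Vert_{0,\infty}\,\bigl\Vert\nabla\bigl(f\varphi_h-I_h(f\varphi_h)\bigr)\bigr\Vert_{0,1}
\le c\,h|\log h|\cdot h\sum_{T\in\mathcal T_h}\Vert D^2(f\varphi_h)\Vert_{L^1(T)}.
\]
Since $\varphi_h$ is piecewise linear, $D^2(f\varphi_h)=(D^2f)\varphi_h+2\,Df\otimes D\varphi_h$ on each element, so the sum is bounded by $c\,\Vert\varphi_h\Vert_1\bigl(\sum_T\Vert f\Vert_{H^2(T)}^2\bigr)^{1/2}$. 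This already yields $ch^2|\log h|\,\Vert\varphi_h\Vert_1(\cdots)^{1/2}$, which serves the second estimate as is and, after a single inverse inequality $\Vert\varphi_h\Vert_1\le ch^{-1}\Vert\varphi_h\Vert$, the first. The companion term $\int_\Omega\rho_u\,I_h(f\varphi_h)$ is treated by $\Vert\rho_u\Vert_{0,\infty}\Vert I_h(f\varphi_h)\Vert_{0,1}$, again giving the same order without any $L^4$ detour. Your element-wise IBP route appears to close once the jump bound $|[F(\nabla\hat u_h)]|\le c\Vert\nabla\rho_u\Vert_{0,\infty}$ and standard local trace/interpolation estimates are inserted, but the ``obstacle'' is self-inflicted; the direct H\"older-plus-$W^{1,1}$-interpolation argument above is the simplification you are looking for. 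Finally, your remark that applying Lemma~\ref{Fest} component-wise to $\varphi_h\,\partial_i f$ produces a boundary term (since $\varphi_h\,\partial_i f\notin W^{1,1}_0(\Omega)$) is a valid point the paper does not make explicit; as you note, that term is harmless, being bounded by $\Vert\rho_u\Vert_{0,\infty}\Vert\varphi_h\,\partial_i f\Vert_{L^1(\partial\Omega)}\le ch^2|\log h|\,\Vert\varphi_h\Vert_1\Vert f\Vert_2$.
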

\begin{proof} In view of the definition (\ref{msproj}) of $\hat u_h$ we obtain
\begin{eqnarray*}
\lefteqn{
\int_{\Omega} f \bigl( \frac{\nabla u}{Q(u)} - \frac{\nabla \hat u_h}{Q(\hat u_h)}  \bigr) \cdot \nabla \varphi_h \, dx  } \\
& = & \int_{\Omega}  \bigl( \frac{\nabla u}{Q(u)} - \frac{\nabla \hat u_h}{Q(\hat u_h)} \bigr) \cdot \nabla ( f \,  \varphi_h ) \, dx -
\int_{\Omega} \varphi_h  \bigl( \frac{\nabla u}{Q(u)} - \frac{\nabla \hat u_h}{Q(\hat u_h)} \bigr)  \cdot \nabla f \, dx \\
& = & \int_{\Omega}  \bigl( \frac{\nabla u}{Q(u)} - \frac{\nabla \hat u_h}{Q(\hat u_h)} \bigr) \cdot \nabla \bigl( f \varphi_h - I_h(f \varphi_h) \bigr) \, dx
+ \int_{\Omega} \rho_u I_h( f \varphi_h) \, dx \\
& &   -\int_{\Omega} \varphi_h  \bigl( \frac{\nabla u}{Q(u)} - \frac{\nabla \hat u_h}{Q(\hat u_h)} \bigr)  \cdot \nabla f \, dx
=: I+ II +III.
\end{eqnarray*}
Here, $I_h$ denotes the Lagrange interpolation operator. An interpolation estimate implies
\begin{eqnarray*}
| I |  & \leq & C \Vert \nabla \rho_u \Vert_{0,\infty} \Vert\nabla \bigl(  f \varphi_h - I_h( f \varphi_h)   \bigr)\Vert_{0,1} \leq 
c h^2 | \log h | \sum_{T \in \mathcal T_h} \Vert D^2( f \varphi_h) \Vert_{L^1(T)} \\
& \leq & c h^2 | \log h | \Vert \varphi_h \Vert_1 \bigl(\sum_{T \in \mathcal T_h} \Vert f \Vert_{H^2(T)}^2  \bigr)^{\frac{1}{2}}.
\end{eqnarray*}
Next,
\begin{eqnarray*}
| II | & \leq & \Vert \rho_u \Vert_{0,\infty}  \Vert I_h ( f \varphi_h) \Vert_{0,1} \leq c h^2 | \log h | \bigl( \Vert f \varphi_h \Vert_{0,1} + \Vert f \varphi_h - I_h( f \varphi_h) 
\Vert_{0,1} \bigr) \\
& \leq & c h^2 | \log h | \Vert \varphi_h \Vert_1 \bigl(\sum_{T \in \mathcal T_h} \Vert f \Vert_{H^2(T)}^2  \bigr)^{\frac{1}{2}}.
\end{eqnarray*}
Finally,
\begin{displaymath}
| III | \leq  C \Vert \nabla \rho_u \Vert_{0,\infty} \Vert \varphi_h \Vert \, \Vert f \Vert_1 \leq c h | \log h | \Vert \varphi_h \Vert \, \Vert f \Vert_1, 
\end{displaymath}
while Lemma \ref{Fest} yields in the case that $f \in H^2(\Omega)$
\begin{displaymath}
| III | \leq c h^2 | \log h | \Vert  \varphi_h \nabla f  \Vert_{1,1} \leq c h^2  | \log h | \Vert \varphi_h \Vert_1 \Vert f \Vert_2.
\end{displaymath}
The result now follows from the above bounds together with (\ref{invers}).
\end{proof}

\section{Error Analysis} \label{sec:4}
\setcounter{equation}{0}

\noindent
Let us begin with two useful estimates involving the quantities $Q$ and $\nu$.

\begin{lem} Let $u,v \in W^{1,\infty}(\Omega)$. Then we have a.e. in $\Omega$:
\begin{eqnarray}
| \nabla (v-u) | & \leq & \bigl( 1+ \sup_\Omega | \nabla v| \bigr) Q(u) | \nu(v) - \nu(u) |; \label{q1} \\
Q(v)-Q(u) & = & \frac{\nabla u}{Q(u)} \cdot \nabla (v-u) + \frac{| \nabla (v-u) |^2}{2 Q(u)} - \frac{(Q(v)-Q(u))^2}{2 Q(u)}. \label{q2}
\end{eqnarray}
\end{lem}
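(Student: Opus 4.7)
The plan is to establish both identities by direct computation based on the explicit expressions $Q(u)=\sqrt{1+|\nabla u|^2}$ and $\nu(u)=Q(u)^{-1}(-\nabla u,1)$.

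For (\ref{q1}), I would begin by writing $\nabla u = -Q(u)\,\pi\nu(u)$, with $\pi:\mathbb{R}^3\to\mathbb{R}^2$ denoting the projection onto the first two coordinates, and similarly for $v$. A simple add--and--subtract then gives the decomposition
\begin{displaymath}
\nabla(v-u) = Q(u)\,\pi\bigl(\nu(u)-\nu(v)\bigr) + \bigl(Q(u)-Q(v)\bigr)\pi\nu(v).
\end{displaymath}
Since $|\pi\nu(v)|=|\nabla v|/Q(v)$, this immediately yields
\begin{displaymath}
|\nabla(v-u)| \leq Q(u)\,|\nu(u)-\nu(v)| + \tfrac{|\nabla v|}{Q(v)}\,|Q(u)-Q(v)|.
\end{displaymath}
The factor $|Q(u)-Q(v)|$ is then controlled by the observation that the third component of $\nu(u)-\nu(v)$ equals $Q(u)^{-1}-Q(v)^{-1}=(Q(v)-Q(u))/(Q(u)Q(v))$, so that $|Q(u)-Q(v)| \leq Q(u)Q(v)\,|\nu(u)-\nu(v)|$. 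Substituting produces the factor $1+|\nabla v|$, and taking the supremum over $\Omega$ yields (\ref{q1}).

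Identity (\ref{q2}) is a purely algebraic manipulation. Multiplying by $2Q(u)$ reduces the claim to
\begin{displaymath}
2Q(u)\bigl(Q(v)-Q(u)\bigr) = 2\nabla u\cdot\nabla(v-u) + |\nabla(v-u)|^2 - \bigl(Q(v)-Q(u)\bigr)^2.
\end{displaymath}
Expanding the right--hand side and using $Q(u)^2 = 1+|\nabla u|^2$ and $Q(v)^2 = 1+|\nabla v|^2$, the $|\nabla u|^2$ and $|\nabla v|^2$ contributions cancel against the mixed terms, leaving $2Q(u)Q(v)-2Q(u)^2 = 2Q(u)(Q(v)-Q(u))$. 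Nothing more is required. Alternatively, (\ref{q2}) can be read off from $Q(v)-Q(u)=(Q(v)^2-Q(u)^2)/(Q(v)+Q(u))$ after writing $|\nabla v|^2-|\nabla u|^2 = 2\nabla u\cdot\nabla(v-u) + |\nabla(v-u)|^2$ and replacing the denominator $Q(v)+Q(u)$ by $2Q(u)$ at the cost of the correction $-(Q(v)-Q(u))^2/(2Q(u))$.

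The only mildly subtle point lies in (\ref{q1}): extracting $Q(v)$ rather than $Q(u)$ from the leading piece of $\nabla(v-u)$ would produce a bound controlled by $v$. Since $v$ will play the role of a discrete function whose gradient is not a priori bounded, the decomposition above together with the third--component trick for $|Q(u)-Q(v)|$ is what yields the clean form with $Q(u)$ as the dominant factor and only $\sup|\nabla v|$ entering as a scalar. No other obstacle is anticipated.
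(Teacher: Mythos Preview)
Your proof is correct and follows essentially the same route as the paper. Your decomposition $\nabla(v-u)=Q(u)\pi(\nu(u)-\nu(v))+(Q(u)-Q(v))\pi\nu(v)$ is exactly the identity the paper records as $\nabla v-\nabla u=Q(u)\bigl(\tfrac{\nabla v}{Q(v)}-\tfrac{\nabla u}{Q(u)}\bigr)+Q(u)\bigl(\tfrac{1}{Q(u)}-\tfrac{1}{Q(v)}\bigr)\nabla v$, and your ``third--component trick'' is precisely how one reads off the bound from the fact that $\nu(u)=(-\nabla u/Q(u),1/Q(u))$; for (\ref{q2}) the paper simply says ``straightforward calculation'', which is what you supply.

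One small slip in your closing remark (not in the proof itself): in the paper's application of (\ref{q1}) the roles are the other way round --- $u$ is the fully discrete height $u^m_h$ (whose gradient is not yet controlled, so $Q(u)$ is allowed to appear as a factor to be absorbed later), while $v$ is the Ritz projection $\hat u^m_h$ with bounded gradient, so that $\sup_\Omega|\nabla v|$ is harmless.
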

\begin{proof} The estimate  (\ref{q1}) is a consequence of the relation
\begin{displaymath}
\nabla v - \nabla u = Q(u) \bigl( \frac{\nabla v}{Q(v)} - \frac{\nabla u }{Q(u)} \bigr) + Q(u) \bigl( \frac{1}{Q(u)} - \frac{1}{Q(v)} \bigr) \nabla v
\end{displaymath}
and the fact that $\nu(u)=\bigl(\frac{-\nabla u}{Q(u)}, \frac{1}{Q(u)} \bigr)$, while (\ref{q2}) follows from a straightforward calculation.
\end{proof}

\noindent
Let us decompose the errors $e_u^m=u^m- u^m_h, \, e_w^m=w^m-w^m_h$ as follows:
\begin{eqnarray}
e_u^m &=& (u^m- \hat u^m_h) + (\hat u^m_h - u^m_h)=: \rho^m_u+ e^m_{h,u}, \label{defeu} \\
e_w^m &=& (w^m- \hat w^m_h) + (\hat w^m_h - w^m_h)=: \rho^m_w+ e^m_{h,w} \label{defew}
\end{eqnarray}
and note that $e^m_{h,u} \in X_h, \, e^m_{h,w} \in X_{h0}$. It will be convenient to introduce the quantities
\begin{eqnarray} 
E^m &:= &  \int_{\Omega} | \nu(u^m_h) - \nu(\hat u^m_h) |^2 Q(u^m_h) \, dx, \label{defEm} \\
F^m &:= & \frac{1}{2} E^m - \int_{\Omega} d^m \cdot \nabla e^m_{h,u} \, \rho^m_u \, dx, \label{defF}
\end{eqnarray}
where
\begin{equation} \label{defdm}
d^m= - \frac{ u^m_t \nabla u^m}{\sqrt{1+ | \nabla u^m |}^3}.
\end{equation}
\noindent
We shall use an induction argument and claim that
\begin{equation} \label{induction}
F^m + \frac{\beta^2}{2} \int_{\Omega} (e^m_{h,w})^2 Q(u^m_h) \, dx \leq \bigl( \tau^2+h^4 | \log h |^4 \bigr) \, e^{\mu t_m}, \quad
m=0,1,\ldots,M
\end{equation}
provided that $\tau \leq \delta_0  h  | \log h |^{- \frac{1}{2}}$. 
The constants $\delta_0, ~0< \beta \leq 1$ and $\mu>0$ are independent of $h$ and $\tau$ and will be chosen a posteriori. To begin, choose $h_0>0$ so small that
\begin{equation} \label{defh0}
 h^2 | \log h  |^5 e^{\mu T} \leq \frac{1}{2} \mbox{ and } | \log h | \geq \frac{1}{\beta^2} \qquad \mbox{ for all } 0< h \leq h_0.
\end{equation}
Clearly, (\ref{induction}) holds for $m=0$ since $e^0_{h,u}=e^0_{h,w}=0$ by the choice
of our initial data for the scheme. 
Let us assume that it is true for some $m \in \lbrace 0,\ldots,M-1 \rbrace$. Then we have for $0<h \leq h_0$ that
\begin{equation} \label{embound}
F^m + \frac{\beta^2}{2} \int_{\Omega} (e^m_{h,w})^2 Q(u^m_h) \, dx  \leq \bigl( \delta_0^2 h^2 | \log h |^{-1} + h^4 | \log h |^4 \bigr) e^{\mu T} \leq
h^2 | \log h |^{-1},
\end{equation}
provided that $\delta_0$ and  $\mu$ satisfy
\begin{equation} \label{cond1}
\delta_0^2 e^{\mu T} \leq \frac{1}{2}.
\end{equation}
In what follows we shall denote by $c$ a generic constant that is independent of $\delta_0,\beta$ and $\mu$.
We infer from an inverse estimate, (\ref{embound}), the fact that $Q(u^m_h) \geq 1$ and (\ref{defh0}) that
\begin{equation} \label{whbound}
\Vert w^m_h \Vert_{0,\infty} \leq  \Vert \hat w^m_h \Vert_{0,\infty} + \Vert e^m_{h,w} \Vert_{0,\infty} \leq c + c h^{-1} \Vert e^m_{h,w} \Vert \leq 
c+ \frac{c}{\beta} | \log h |^{-\frac{1}{2}} \leq c. 
\end{equation}
\noindent
Next, we deduce with the help of $\Vert \nabla \hat u^m_h \Vert_{0,\infty} \leq c$ and  (\ref{invers1}) that  
\begin{equation}  \label{qest}
 \sup_{\bar \Omega} Q(u^m_h) \leq 1 + \sup_{\bar \Omega} | \nabla u^m_h | \leq 1 + \Vert \nabla \hat u^m_h \Vert_{0,\infty} + \Vert \nabla e^m_{h,u} \Vert_{0,\infty} 
 \leq c + c h^{-1} \Vert \nabla e^m_{h,u} \Vert.
\end{equation}
It follows from (\ref{q1})   that 
\begin{align*}
| \nabla e^m_{h,u} | = | \nabla (u^m_h - \hat u^m_h) | \leq (1+ \sup_{\bar \Omega} | \nabla \hat u^m_h | ) Q(u^m_h) | \nu(u^m_h) - \nu(\hat u^m_h) | \leq
c | \nu(u^m_h) - \nu(\hat u^m_h) | Q(u^m_h).
\end{align*}
Thus,
\begin{align*}
 \Vert \nabla e^m_{h,u} \Vert^2 & \leq c \int_{\Omega} | \nu(u^m_h) - \nu(\hat u^m_h) |^2 Q(u^m_h)^2 \, dx \leq c \sup_{\bar \Omega} Q(u^m_h) E^m \\
& \leq c \sup_{\bar \Omega} Q(u^m_h) \bigl( F^m + \Vert \rho^m_u \Vert  \Vert \nabla e^m_{h,u} \Vert \bigr) \leq
c \sup_{\bar \Omega} Q(u^m_h) \bigl( F^m + h^2  \Vert \nabla e^m_{h,u} \Vert \bigr)
\end{align*}
and hence
\begin{equation} \label{emhuest}
\Vert \nabla e^m_{h,u} \Vert^2 \leq c \sup_{\bar \Omega} Q(u^m_h) F^m + c h^4 (\sup_{\bar \Omega} Q(u^m_h))^2.
\end{equation}
If we insert this bound into (\ref{qest}) and recall (\ref{embound})  we obtain
\begin{displaymath}
\sup_{\bar \Omega} Q(u^m_h) \leq c + c \bigl( | \log h |^{-1} \,  \sup_{\bar \Omega} Q(u^m_h) \bigr)^{\frac{1}{2}} + c h \sup_{\bar \Omega} Q(u^m_h)
\end{displaymath}
and therefore
\begin{equation} \label{qbound}
\sup_{\bar \Omega} Q(u^m_h) \leq c
\end{equation}
provided that $0< h \leq h_1$ for some sufficiently small $0<h_1 \leq h_0$. Furthermore, we infer from (\ref{embound}), (\ref{emhuest}) and (\ref{qbound})  that
\begin{eqnarray} 
\Vert \nabla e^m_{h,u} \Vert^2 & \leq & c F^m + c h^4 \leq c h^2 | \log h |^{-1}, \label{embound2} \\
\frac{1}{2} E^m & \leq & F^m + c \Vert \nabla e^m_{h,u} \Vert \Vert \rho^m_u \Vert \leq ch^2 | \log h |^{-1}. \label{embound1} 
\end{eqnarray}

\subsection{The graph equation} 
\noindent
Evaluating (\ref{Gamma1weak}) at $t=t_m$ and using the definition (\ref{msproj}) of $\hat u_h$ we derive for $\varphi_h \in X_h$
\begin{displaymath}
\int_{\Omega} \frac{u^m_t \, \varphi_h}{Q(u^m)} \,  dx + \int_{\Omega} \frac{\nabla \hat u^m_h \cdot \nabla \varphi_h}{Q(\hat u^m_h)} \,  dx =
\int_{\Omega} f( w^m) \, \varphi_h \, dx  + \int_{\Omega} \rho^m_u \, \varphi_h \, dx
\end{displaymath}
and hence
\begin{align}
& \frac{1}{\tau} \int_{\Omega} \frac{(u^{m+1}-u^m) \varphi_h}{Q(u^m)} \, dx + \int_{\Omega} \frac{\nabla \hat u^{m+1}_h \cdot \nabla \varphi_h}{Q(\hat u^m_h)} \,  dx  \label{erroru}  \\
& \qquad  = \int_{\Omega} f( w^m) \, \varphi_h \, dx 
 + \int_{\Omega} \frac{\nabla( \hat u^{m+1}_h - \hat u^m_h) \cdot \nabla \varphi_h}{Q(\hat u^m_h)} \, dx +  \int_{\Omega} R^m \, \varphi_h \, dx. \nonumber
\end{align}
Here, $R^m= \frac{1}{Q(u^m)} \bigl( \frac{u^{m+1}-u^m}{\tau} - u^m_t \bigr) + \rho^m_u$, so that in view of (\ref{ms1})
\begin{equation} \label{rmest}
\Vert R^m \Vert \leq \int_{t_m}^{t_{m+1}} \Vert u_{tt} \Vert \, dt + \Vert \rho^m_u\Vert \leq c \bigl( \tau+ h^2).
\end{equation}
Combining (\ref{erroru}) with (\ref{disc1}) we obtain the error relation
\begin{align}
& \frac{1}{\tau} \int_{\Omega} \frac{(e^{m+1}_u-e^m_u) \varphi_h }{Q(u^m_h)} \, dx + \int_{\Omega} \Bigl( \frac{\nabla \hat u^{m+1}_h}{Q(\hat u^m_h)} - \frac{\nabla u^{m+1}_h}{Q(u^m_h)}
\Bigr) \cdot \nabla \varphi_h \, dx = \int_{\Omega} \bigl( f(w^m) - f(w^m_h) \bigr) \varphi_h \, dx \nonumber \\
& \quad +  \frac{1}{\tau} \int_{\Omega}(u^{m+1}-u^m) \Bigl( \frac{1}{Q(u^m_h)} - \frac{1}{Q(u^m)} \Bigr) \, \varphi_h \, dx 
+ \int_{\Omega} R^m \varphi_h \, dx +
\int_{\Omega} \frac{\nabla( \hat u^{m+1}_h - \hat u^m_h) \cdot \nabla \varphi_h}{Q(\hat u^m_h)} \, dx. \label{erroru1}
\end{align}
If we insert $\varphi_h= \frac{1}{\tau} \bigl( e^{m+1}_{h,u} - e^m_{h,u} \bigr)$ into (\ref{erroru1})  we derive
\begin{align}
&  \frac{1}{\tau^2} \int_{\Omega} \frac{( e^{m+1}_u - e^m_u)^2}{Q(u^m_h)} \, dx + \frac{1}{\tau}  \int_{\Omega} \Bigl( \frac{\nabla \hat u^{m+1}_h}{Q(\hat u^m_h)} - \frac{\nabla u^{m+1}_h}{Q(u^m_h)} \Bigr) \cdot \nabla (e^{m+1}_{h,u} - e^m_{h,u}) \, dx  \nonumber   \\
& =  \frac{1}{\tau^2} \int_{\Omega} \frac{(e^{m+1}_u - e^m_u)(\rho^{m+1}_u - \rho^m_u)}{Q(u^m_h)} \, dx 
+ \frac{1}{\tau} \int_{\Omega} \frac{\nabla( \hat u^{m+1}_h - \hat u^m_h) \cdot \nabla (e^{m+1}_{h,u} - e^m_{h,u})}{Q(\hat u^m_h)} \, dx \nonumber  \\
& \quad +   \frac{1}{\tau^2} \int_{\Omega}(u^{m+1}-u^m)  \, (e^{m+1}_{h,u} - e^m_{h,u})  \, \Bigl( \frac{1}{Q(u^m_h)} - \frac{1}{Q(u^m)} \Bigr) dx
+ \frac{1}{\tau}  \int_{\Omega} R^m (e^{m+1}_{h,u} - e^m_{h,u}) \, dx \nonumber \\
& \quad + \frac{1}{\tau}  \int_{\Omega} \bigl( f(w^m) - f(w^m_h) \bigr)
(e^{m+1}_{h,u} - e^m_{h,u}) \, dx \quad   =: \sum_{i=1}^5 A_i. \label{err2}
\end{align}
In order to proceed we make use of the analysis in \cite{DD00} for the mean curvature flow of graphs subject to Dirichlet boundary conditions. The relation (\ref{err2})
corresponds to \cite[(3.12)]{DD00} where we use $e^m_u,e^m_{h,u}, \rho^m_u$ instead of $e^m,e^m_h,\epsilon^m$ respectively. Furthermore, our remainder term $R^m$ is defined in a different way and the term $A_5$ is not present in \cite{DD00}. We shall refer to the calculations in \cite{DD00} whenever possible and focus on the changes due to
the differences mentioned above and the use of a Neumann boundary condition. To begin, it follows from Lemma 2 in \cite{DD00} that
\begin{align}
& \frac{1}{\tau} \int_{\Omega} \Bigl( \frac{\nabla \hat u^{m+1}_h}{Q(\hat u^m_h)} - \frac{\nabla u^{m+1}_h}{Q(u^m_h)} \Bigr)
\cdot \nabla (e^{m+1}_{h,u} - e^m_{h,u}) \, dx \geq \frac{1}{2 \tau } ( E^{m+1} - E^m) \nonumber \\
& \qquad + \frac{1}{4 \tau} \int_{\Omega} \frac{| \nabla (e^{m+1}_{h,u} - e^m_{h,u})|^2}{Q(\hat u^m_h)} \, dx - c (E^m+ E^{m+1}) - c \tau^2. \label{err3}
\end{align} 
The lemma holds under the condition that $h^{-2} E^m \leq \gamma$ and $\gamma>0$ is sufficiently small, which can be achieved in view of (\ref{embound1}) if $0<h \leq h_2$ and
$h_2 \leq h_1$ is small enough. \\
Let us consider the terms on the right hand side of (\ref{err2}). The term $S_1$ is estimated in (i) at the bottom of page 352 in \cite{DD00}, so that
\begin{align}
| A_1 | \leq \frac{\delta}{\tau^2} \int_{\Omega} \frac{( e^{m+1}_u - e^m_u)^2}{Q(u^m_h)} \, dx + \frac{c}{\delta} h^4 | \log h|^4. \label{err4}
\end{align}
The integral $A_2$ is treated in (ii) on page 353 in \cite{DD00} and uses integration by parts for the term
\begin{displaymath}
\int_{\Omega} \frac{\nabla (u^{m+1}-u^m) \cdot \nabla (e^{m+1}_{h,u} - e^m_{h,u})}{Q(u^m)} \, dx.
\end{displaymath}
Since $\nabla (u^{m+1} - u^m) \cdot n=0$ on $\partial \Omega$ in view of (\ref{strongbc}) the boundary integral vanishes and we obtain in the same way as
in \cite{DD00}
\begin{align} \label{err5}
| A_2 | \leq \frac{\delta}{\tau} \int_{\Omega} \frac{| \nabla (e^{m+1}_{h,u} - e^m_{h,u})|^2}{Q(\hat u^m_h)} \, dx+ 
\frac{\delta}{\tau^2} \int_{\Omega} \frac{( e^{m+1}_u - e^m_u)^2}{Q(u^m_h)} \, dx + \frac{c}{\delta} \bigl( \tau^2 + h^4 | \log h |^4 \bigr).
\end{align}
The term $A_3$ is handled in (iii) on pages 353 to 356 in \cite{DD00}. It again involves an integration by parts, namely for the term
\begin{displaymath}
- \frac{1}{\tau^2} \int_{\Omega} (u^{m+1}- u^m) (e^{m+1}_{h,u} - e^m_{h,u}) b^m \cdot  \nabla \rho^m_u \, dx,
\end{displaymath}
which is $II$ at the top of page 354. Here $b^m=B(\nabla u^m)$ with $B_i(p)= \frac{\partial}{\partial p_i} \Bigl( \frac{1}{\sqrt{1+ |p|^2}} \Bigr)= - \frac{p_i}{\sqrt{1+ |p|^2}^3}$.
As a result, the boundary integral reads
\begin{displaymath}
- \frac{1}{\tau^2} \int_{\partial \Omega} (u^{m+1}- u^m) (e^{m+1}_{h,u} - e^m_{h,u}) \, b^m \cdot n \, \rho^m_u \, do = 0,
\end{displaymath}
since $b^m \cdot n = - \frac{\nabla u^m \cdot n}{\sqrt{1+ | \nabla u^m |^2}^3} = 0$ on $\partial \Omega$ again by (\ref{strongbc}). Thus we obtain from \cite{DD00} (see top of
page 356) that
\begin{align}
& | A_3 | \leq \frac{1}{\tau} \int_{\Omega} d^{m+1} \cdot \nabla e^{m+1}_{h,u} \, \rho^{m+1}_u \, dx - \frac{1}{\tau} \int_{\Omega} d^m \cdot \nabla e^m_{h,u} \, \rho^m_u \, dx 
\label{err6} \\
& + \frac{6 \delta}{\tau^2} \int_{\Omega} \frac{( e^{m+1}_u - e^m_u)^2}{Q(u^m_h)} \, dx + \frac{2 \delta}{\tau} \int_{\Omega} \frac{| \nabla (e^{m+1}_{h,u} - e^m_{h,u})|^2}{Q(\hat u^m_h)} \, dx
+ \frac{c}{\delta} h^4 | \log h |^4 + \frac{c}{\delta} \bigl( E^m + E^{m+1} \bigr) \nonumber
\end{align}
with $d^m$ as in (\ref{defdm}) (see top of page 355). Next, (\ref{rmest}) implies that
\begin{align}
 | A_4| &\leq \frac{1}{\tau} \Vert R^m \Vert \Vert e^{m+1}_{h,u} - e^m_{h,u} \Vert \leq \frac{c}{\tau} \bigl( \tau + h^2 \bigr) \bigl( \Vert e^{m+1}_u - e^m_u \Vert+ 
\Vert \rho^{m+1}_u - \rho^m_u \Vert \bigr) \nonumber \\
& \leq \frac{\delta}{\tau^2} \int_{\Omega} \frac{( e^{m+1}_u - e^m_u)^2}{Q(u^m_h)} \, dx + \frac{c}{\delta} \bigl( \tau^2 + h^4 | \log h |^4 \bigr), \label{err7}
\end{align}
since $\displaystyle \Vert \rho^{m+1}_u - \rho^m_u \Vert \leq c \tau \sup_{t_m \leq t \leq t_{m+1}} \Vert u_t(\cdot,t) - \hat u_{h,t}(\cdot,t) \Vert \leq c \tau h^2 | \log h |^2$
by (\ref{ms3}). Recalling (\ref{whbound}) and the assumption that $f \in C^{0,1}_{\mbox{\footnotesize{loc}}}(\mathbb R)$ we obtain in a similar way
\begin{align}
| A_5 | \leq \frac{c}{\tau} \int_{\Omega} | e^m_w | \, | e^{m+1}_{h,u} - e^m_{h,u} | \, dx \leq \frac{\delta}{\tau^2} \int_{\Omega} \frac{( e^{m+1}_u - e^m_u)^2}{Q(u^m_h)} \, dx
+ \frac{c}{\delta} \bigl( \Vert e^m_w \Vert^2 + h^4 | \log h|^4 \bigr). \label{err8}
\end{align}
If we insert (\ref{err3})--(\ref{err8}) into (\ref{err2}) we obtain after multiplying by $\tau$ and choosing $\delta>0$ sufficiently small
\begin{align*}
& \frac{1}{2 \tau} \int_{\Omega} \frac{( e^{m+1}_u - e^m_u)^2}{Q(u^m_h)} \, dx + \frac{1}{8} \int_{\Omega} \frac{| \nabla (e^{m+1}_{h,u} - e^m_{h,u})|^2}{Q(\hat u^m_h)} \, dx 
 + (\frac{1}{2} - c\tau )E^{m+1} - \int_{\Omega} d^{m+1} \cdot \nabla e^{m+1}_{h,u} \, \rho^{m+1}_u \, dx \\
& \leq (\frac{1}{2} + c \tau) E^m - \int_{\Omega} d^m \cdot \nabla e^m_{h,u} \, \rho^m_u \, dx + c \tau (\tau^2 + h^4 | \log h |^4) + c \tau \Vert e^m_w \Vert^2.
\end{align*}
Recalling the definition of $F^m$ (\ref{defF}), and noting (\ref{embound1}) and (\ref{ms1}), we deduce that
\begin{align}
& \frac{1}{2 \tau} \int_{\Omega} \frac{( e^{m+1}_u - e^m_u)^2}{Q(u^m_h)} \, dx + \frac{1}{8} \int_{\Omega} \frac{| \nabla (e^{m+1}_{h,u} - e^m_{h,u})|^2}{Q(\hat u^m_h)} \, dx 
 + (1 - c\tau )F^{m+1} \nonumber \\
& \leq (1 + c \tau) F^m + c \tau h^2 \bigl( \Vert \nabla e^{m+1}_{h,u} \Vert + \Vert \nabla e^m_{h,u} \Vert \bigr) 
+ c \tau (\tau^2 + h^4 | \log h |^4) + c \tau \Vert e^m_w \Vert^2. \label{err9}
\end{align}
The second term on the right hand side of (\ref{err9}) is estimated by
\begin{align*}
& \tau h^2  \bigl( \Vert \nabla e^{m+1}_{h,u} \Vert + \Vert \nabla e^m_{h,u} \Vert \bigr) \leq \tau h^2 \bigl( \Vert \nabla (e^{m+1}_{h,u} -e^m_{h,u})  \Vert
+ 2 \Vert \nabla e^m_{h,u} \Vert \bigr) \\
& \leq  \frac{1}{16} \int_{\Omega} \frac{| \nabla (e^{m+1}_{h,u} - e^m_{h,u})|^2}{Q(\hat u^m_h)} \, dx + c \tau \Vert \nabla e^m_{h,u} \Vert^2 + c \tau h^4 \\
& \leq  \frac{1}{16} \int_{\Omega} \frac{| \nabla (e^{m+1}_{h,u} - e^m_{h,u})|^2}{Q(\hat u^m_h)} \, dx + c \tau F^m + c \tau h^4,
\end{align*}
where we have used (\ref{embound2}) in the last step. 
Inserting this estimate into (\ref{err9}) we infer that
\begin{eqnarray} 
\lefteqn{
\frac{1}{2 \tau} \int_{\Omega} \frac{( e^{m+1}_u - e^m_u)^2}{Q(u^m_h)} \, dx + F^{m+1} +\frac{1}{16} \int_{\Omega} \frac{| \nabla (e^{m+1}_{h,u} - e^m_{h,u})|^2}{Q(\hat u^m_h)} \, dx }
\nonumber \\
& \leq &  (1+ c \tau )F^m + c \tau (\tau^2 + h^4 | \log h |^4) + c \tau 
\int_{\Omega} (e^m_{h,w})^2 Q(u^m_h) \, dx. \label{fmp1}
\end{eqnarray}
We deduce from (\ref{fmp1}) and the induction hypothesis (\ref{induction}) 
together with (\ref{qbound}) 
\begin{eqnarray}
 \frac{1}{2c} \frac{1}{\tau} \Vert e^{m+1}_u - e^m_u \Vert^2+ F^{m+1}  
& \leq  & \bigl( \tau^2 + h^4 | \log h |^4 \bigr) e^{\mu t_m} \Bigl(  (1 +   c \tau \bigl( 1 +   \frac{1}{\beta^2 } \bigr) \Bigr)  \nonumber \\
& \leq &  \bigl( \tau^2 + h^4 | \log h |^4 \bigr)  e^{\mu t_{m+1}} \leq h^2 | \log h |^{-1}, \label{fmp2}
\end{eqnarray}
provided that 
\begin{equation} \label{cond2}
\mu \geq c \bigl( 1 + \frac{1}{\beta^2} \bigr).
\end{equation}
Note that for the last inequality in (\ref{fmp2}) we have used again (\ref{defh0}), (\ref{cond1}) and the fact that $\tau \leq \delta_0 h | \log h |^{-\frac{1}{2}}$. In particular,  
we can repeat the arguments leading to (\ref{qbound}) and (\ref{embound2}) and obtain 
\begin{equation} \label{qbound1}
\sup_{\bar \Omega} Q(u^{m+1}_h) \leq c \quad \mbox{ and } \quad  \Vert \nabla e^{m+1}_{h,u} \Vert^2 \leq c h^2 | \log h |^{-1}.
\end{equation}

\subsection{The surface PDE}
\noindent
As already mentioned in the Introduction the error analysis of the surface equation is laborious. Much of this work is related to the handling of differences of
the form $Q(u^{m+1})-Q(u^{m+1}_h)$, which are typically split into $Q(u^{m+1})-Q(\hat u^{m+1}_h)$ and  $Q(\hat u^{m+1}_h)-Q(u^{m+1}_h)$. The second term can be 
bounded in terms of $\nabla e^{m+1}_{h,u}$, which is naturally controlled within our induction. On the other hand,  simply estimating  the first term by $\nabla \rho^{m+1}_u$ 
will  frequently lead to suboptimal error bounds, which are not sufficient to control the gradient of the discrete height function uniformly. 
Instead, we will try to exploit the structure of $Q(u)$ and frequently apply integration by parts to take advantage of the quadratic convergence of $\rho^{m+1}_u$. \\
Evaluating (\ref{Gamma2weak}) at $t=t_{m+1}$ and using the definition (\ref{wproj}) we obtain for $\eta_h \in X_{h0}$
\begin{eqnarray*}
\lefteqn{ \int_{\Omega} (w \, Q(u))_t (\cdot,t_{m+1}) \eta_h \, dx + \int_{\Omega} E(\nabla \hat u_h^{m+1}) \nabla  \hat w_h^{m+1} \cdot \nabla \eta_h \, dx } \nonumber \\
& = & - \int_{\Omega} \nabla u^{m+1} \cdot \nabla \eta_h \, V^{m+1} w^{m+1} \, dx +  \int_{\Omega} g(V^{m+1},  w^{m+1}) \, \eta_h \,Q(u^{m+1}) \, dx. 
\end{eqnarray*}
If we combine this relation with (\ref{disc2}) we deduce 
\begin{eqnarray}
\lefteqn{ \hspace{-1cm}
\int_{\Omega} e^{m+1}_{h,w}  \eta_h Q(u^{m+1}_h) \, dx - \int_{\Omega} e^m_{h,w} \eta_h Q(u^m_h) \, dx + \tau
\int_{\Omega} E(\nabla u^{m+1}_h) \nabla e^{m+1}_{h,w} \cdot \nabla \eta_h \, dx } \nonumber \\
& = & \int_{\Omega} \bigl( \hat w^{m+1}_h Q(u^{m+1}_h) - \hat w^m_h Q(u^m_h) - \tau (wQ(u))_t(\cdot,t_{m+1}) \bigr) \eta_h \, dx \nonumber \\
& & + \tau \int_{\Omega} \bigl( E(\nabla u^{m+1}_h) - E(\nabla \hat u^{m+1}_h) \bigr) \nabla \hat w^{m+1}_h \cdot \nabla \eta_h \, dx \nonumber \\
& & + \tau \int_{\Omega} \bigl( V^{m+1}_h w^{m+1}_h \nabla u^{m+1}_h - V^{m+1} w^{m+1} \nabla u^{m+1} \bigr) \cdot \nabla \eta_h \, dx \nonumber  \\
& & + \tau \int_{\Omega} \bigl( g(V^{m+1},w^{m+1}) Q(u^{m+1}) - g(V^{m+1}_h,w^m_h) Q(u^{m+1}_h) \bigr) \eta_h \, dx. \label{errw1}
\end{eqnarray}
Inserting $\eta_h=e^{m+1}_{h,w}$ we derive after some straightforward manipulations
\begin{eqnarray}
\lefteqn{ \hspace{-1.2cm}
\frac{1}{2} \int_{\Omega} (e^{m+1}_{h,w})^2 Q(u^{m+1}_h) \, dx + \tau 
\int_{\Omega} E(\nabla u^{m+1}_h) \nabla e^{m+1}_{h,w} \cdot \nabla e^{m+1}_{h,w} \, dx + \frac{1}{2} \int_{\Omega} (e^{m+1}_{h,w}- e^m_{h,w})^2 Q(u^m_h) \, dx } \nonumber \\
& = &  \frac{1}{2} \int_{\Omega} (e^m_{h,w})^2 Q(u^m_h) \, dx + \frac{1}{2} \int_{\Omega} (e^{m+1}_{h,w})^2 \bigl(  Q(u^m_h) - Q(u^{m+1}_h)  \bigr) \, dx \nonumber \\
& & + \int_{\Omega} \bigl( \hat w^{m+1}_h Q(u^{m+1}_h) - \hat w^m_h Q(u^m_h) - \tau (wQ(u))_t(\cdot,t_{m+1}) \bigr) e^{m+1}_{h,w} \, dx \nonumber \\
& & + \tau \int_{\Omega} \bigl( E(\nabla u^{m+1}_h) - E(\nabla \hat u^{m+1}_h) \bigr) \nabla \hat w^{m+1}_h \cdot \nabla e^{m+1}_{h,w} \, dx \nonumber \\
& & + \tau \int_{\Omega} \bigl( V^{m+1}_h w^{m+1}_h \nabla u^{m+1}_h - V^{m+1} w^{m+1} \nabla u^{m+1} \bigr) \cdot \nabla e^{m+1}_{h,w} \, dx \nonumber  \\
& & + \tau \int_{\Omega} \bigl( g(V^{m+1},w^{m+1}) Q(u^{m+1}) - g(V^{m+1}_h,w^m_h) Q(u^{m+1}_h) \bigr) e^{m+1}_{h,w} \, dx \nonumber \\
& =: & \frac{1}{2} \int_{\Omega} (e^m_{h,w})^2 Q(u^m_h) \, dx +  \sum_{i=1}^5 B_i. \label{errw2}
\end{eqnarray}
(i) Rearranging the estimate $\displaystyle \frac{\nabla u^m_h \cdot \nabla u^{m+1}_h+1}{Q(u^m_h) Q(u^{m+1}_h)} =\nu(u^m_h) \cdot \nu(u^{m+1}_h) \leq 1$ implies that
\begin{displaymath}
Q(u^m_h) - Q(u^{m+1}_h) \leq    \frac{\nabla u^m_h}{Q(u^m_h)} \cdot \nabla (u^m_h - u^{m+1}_h),
\end{displaymath}
so that
\begin{eqnarray*}
B_1 & \leq & \frac{1}{2} \int_{\Omega} (e^{m+1}_{h,w})^2 \frac{\nabla u^m_h}{Q(u^m_h)} \cdot \nabla (u^m_h - u^{m+1}_h) \, dx = 
\frac{1}{2} \int_{\Omega} (e^{m+1}_{h,w})^2 \frac{\nabla u^m}{Q(u^m)} \cdot \nabla (u^m_h - u^{m+1}_h) \, dx \\
& & + \frac{1}{2} \int_{\Omega} (e^{m+1}_{h,w})^2 \bigl( \frac{\nabla u^m_h}{Q(u^m_h)} - \frac{\nabla u^m}{Q(u^m)} \bigr) \cdot \nabla (u^m_h - u^{m+1}_h) \, dx =:
B_{1,1}+ B_{1,2}.
\end{eqnarray*}
Integration by parts along with an inverse estimate yields
\begin{eqnarray*}
B_{1,1} & = & \int_{\Omega} e^{m+1}_{h,w} \frac{\nabla e^{m+1}_{h,w} \cdot \nabla u^m}{Q(u^m)} (u^{m+1}_h - u^m_h) \, dx + \frac{1}{2} \int_{\Omega}
(e^{m+1}_{h,w})^2 \nabla \cdot \bigl( \frac{\nabla u^m}{Q(u^m)} \bigr) (u^{m+1}_h - u^m_h) \, dx \\
& \leq & c \int_{\Omega} \bigl( | e^{m+1}_{h,w} | \, | \nabla e^{m+1}_{h,w} | + | e^{m+1}_{h,w} |^2 \bigr) \bigl( | e^{m+1}_u - e^m_u | + | u^{m+1} - u^m | \bigr) \, dx \\
& \leq & c \Vert e^{m+1}_{h,w} \Vert_{0,\infty} \Vert e^{m+1}_{h,w} \Vert_1 \Vert e^{m+1}_u - e^m_u \Vert + c \tau \sup_{t_m \leq t \leq t_{m+1}} \Vert u_t \Vert_{0,\infty} 
\Vert e^{m+1}_{h,w} \Vert \Vert e^{m+1}_{h,w} \Vert_1 \\
& \leq & c h^{-1} \Vert e^{m+1}_{h,w} \Vert  \Vert e^{m+1}_{h,w} \Vert_1  \Vert e^{m+1}_u - e^m_u \Vert + c \tau \Vert e^{m+1}_{h,w} \Vert \Vert  e^{m+1}_{h,w} \Vert_1.
\end{eqnarray*}
Next, we deduce from (\ref{invers1}), (\ref{embound2}) and (\ref{ms2}) that
\begin{displaymath}
\Vert \nabla  e^m_u \Vert_{0,\infty} \leq \Vert \nabla e^m_{h,u} \Vert_{0,\infty} + \Vert \nabla \rho^m_u \Vert_{0,\infty} \leq c h^{-1} \Vert \nabla e^m_{h,u} \Vert + c h | \log h | \leq
c | \log h |^{-\frac{1}{2}}
\end{displaymath}
and therefore by (\ref{embed}), (\ref{invers}) and (\ref{ms3})
\begin{eqnarray*}
B_{1,2} & \leq & c \Vert \nabla e^m_u \Vert_{0,\infty} \int_{\Omega} (e^{m+1}_{h,w})^2 \bigl( | \nabla( e^{m+1}_{h,u} - e^m_{h,u}) | + | \nabla ( \hat u^{m+1}_h - \hat u^m_h) | 
\bigr)\, dx \\
& \leq & c | \log h |^{-\frac{1}{2}} \Vert e^{m+1}_{h,w} \Vert_{0,\infty} \Vert e^{m+1}_{h,w} \Vert \bigl( \Vert \nabla (e^{m+1}_{h,u} - e^m_{h,u}) \Vert + \tau \bigr) \\
& \leq & c h^{-1}  \Vert e^{m+1}_{h,w} \Vert_1 \Vert e^{m+1}_{h,w} \Vert  \Vert e^{m+1}_{h,u} - e^m_{h,u} \Vert   + 
c  \tau  \Vert e^{m+1}_{h,w} \Vert_1 \Vert e^{m+1}_{h,w} \Vert \\
& \leq & c h^{-1}  \Vert e^{m+1}_{h,w} \Vert_1 \Vert e^{m+1}_{h,w} \Vert  \Vert e^{m+1}_u - e^m_u \Vert   + 
c  \tau  \Vert e^{m+1}_{h,w} \Vert_1 \Vert e^{m+1}_{h,w} \Vert.
\end{eqnarray*}
Combining the above bounds we find that
\begin{equation} \label{b1}
B_1 \leq \epsilon \tau \Vert e^{m+1}_{h,w} \Vert_1^2 + c_\epsilon \tau \Vert e^{m+1}_{h,w} \Vert^2 + c_\epsilon h^{-2}  \Vert e^{m+1}_{h,w} \Vert^2 
\frac{1}{\tau} \Vert e^{m+1}_u - e^m_u \Vert^2.
\end{equation}
(ii) Let us write
\begin{eqnarray}
B_2 & = & \int_{\Omega} \bigl( w^{m+1} Q(u^{m+1}) - w^m Q(u^m) - \tau (wQ(u))_t(\cdot,t_{m+1}) \bigr) e^{m+1}_{h,w} \, dx \nonumber \\
& & - \int_{\Omega} ( \rho^{m+1}_w - \rho^m_w) Q(u^{m+1}_h) e^{m+1}_{h,w} \, dx  - \int_{\Omega} \rho^m_w \bigl( Q(u^{m+1}_h) - Q(u^m_h) \bigr) e^{m+1}_{h,w} \, dx \nonumber  \\
& & + \int_{\Omega} ( w^{m+1}- w^m) \bigl( Q(u^{m+1}_h) - Q(u^{m+1}) \bigr) e^{m+1}_{h,w} \, dx \nonumber  \\
& & + \int_{\Omega} w^m \Bigl( \bigl( Q( \hat u^{m+1}_h) - Q( u^{m+1}) \bigr) - \bigl( Q( \hat u^m_h) - Q(u^m) \bigr) \Bigr)  e^{m+1}_{h,w} \, dx \nonumber  \\
& & + \int_{\Omega} w^m \Bigl( \bigl( Q(u^{m+1}_h) - Q(\hat u^{m+1}_h) \bigr) - \bigl( Q(u^m_h) - Q(\hat u^m_h) \bigr) \Bigr) e^{m+1}_{h,w} \, dx \nonumber  \\
& =: & \sum_{j=1}^6 B_{2,j}. \label{b2a}
\end{eqnarray}
Recalling (\ref{regu2}), (\ref{regw}), (\ref{qbound1}) and (\ref{w4}) we have
\begin{displaymath}
| B_{2,1} | + | B_{2,2} | \leq c \bigl( \tau^2 + \tau  \sup_{t_m \leq t \leq t_{m+1}} \Vert \rho_{w,t} \Vert \bigr) \Vert e^{m+1}_{h,w} \Vert \leq
c \tau \bigl( \tau + h^2 | \log h |^2 \bigr) \Vert e^{m+1}_{h,w} \Vert.
\end{displaymath}
Next, since $| Q(u^{m+1}_h) - Q(u^m_h) | \leq | \nabla (u^{m+1}_h - u^m_h) |$ we obtain with the help of (\ref{w2}), (\ref{embed}), (\ref{ms1}), (\ref{invers}) and (\ref{ms3}) 
\begin{eqnarray*}
\lefteqn{ | B_{2,3} |  \leq  \Vert \rho^m_w \Vert \Vert \nabla( u^{m+1}_h - u^m_h) \Vert \Vert e^{m+1}_{h,w} \Vert_{0,\infty} } \nonumber \\
& \leq & c h^2 | \log h | \bigl( \Vert \nabla (e^{m+1}_{h,u} - e^m_{h,u} ) \Vert + \Vert \nabla (\hat u_h^{m+1}- \hat u_h^m) \Vert \bigr)
| \log h |^{\frac{1}{2}} \Vert e^{m+1}_{h,w} \Vert_1 \nonumber \\
& \leq & c h | \log h |^{\frac{3}{2}} \Vert e^{m+1}_{h,u} - e^m_{h,u} \Vert \Vert e^{m+1}_{h,w} \Vert_1 + c \tau h^2 | \log h |^{\frac{3}{2}} \Vert 
e^{m+1}_{h,w} \Vert_1 \nonumber \\ 
& \leq & c \Vert e^{m+1}_{h,w} \Vert_1 \bigl(  \Vert e^{m+1}_u - e^m_u \Vert + \tau  h^2 | \log h |^{\frac{3}{2}} \bigr). 
\end{eqnarray*}
Applying Lemma \ref{Fest} to $f= (w^{m+1}-w^m) e^{m+1}_{h,w}$ yields
\begin{eqnarray*}
B_{2,4} & = & \int_{\Omega} (w^{m+1}-w^m) \bigl( Q(u^{m+1}_h) - Q(\hat u^{m+1}_h) \bigr) e^{m+1}_{h,w} \, dx \nonumber  \\
& & + \int_{\Omega} (w^{m+1}-w^m) \bigl( Q(\hat u^{m+1}_h) - Q(u^{m+1}) 
\bigr) e^{m+1}_{h,w} \, dx \nonumber \\
& \leq & c \Vert w^{m+1} - w^m \Vert_{0,\infty}  \Vert \nabla e^{m+1}_{h,u} \Vert \Vert e^{m+1}_{h,w} \Vert + c h^2 | \log h | \Vert (w^{m+1} -w^m) e^{m+1}_{h,w} \Vert_{1,1} \nonumber  \\
& \leq & c \tau \Vert e^{m+1}_{h,w} \Vert_1 \bigl( \Vert \nabla e^{m+1}_{h,u} \Vert + h^2 | \log h | \bigr). 
\end{eqnarray*}
Since $\displaystyle ( Q(\hat u_h) - Q(u)) \bigr)_t= \frac{\nabla \hat u_{h,t} \cdot \nabla \hat u_h}{Q(\hat u_h)} -
\frac{\nabla  u_t \cdot \nabla  u}{Q(u)}$ we obtain
\begin{eqnarray}
\lefteqn{
B_{2,5}  =   \int_{t_m}^{t_{m+1}} \int_{\Omega} w^m \bigl( \frac{\nabla \hat u_{h,t} \cdot \nabla \hat u_h}{Q(\hat u_h)} -
\frac{\nabla  u_t \cdot \nabla  u}{Q(u)} \bigr) e^{m+1}_{h,w} \, dx \, dt } \label{b25} \\
& = &  \int_{t_m}^{t_{m+1}} \int_{\Omega} w^m  \nabla u_t \cdot \bigl(  \frac{\nabla \hat u_h}{Q(\hat u_h)} -
\frac{\nabla  u}{Q(u)} \bigr)  e^{m+1}_{h,w} \, dx \, dt \nonumber  \\
& &  + \int_{t_m}^{t_{m+1}} \int_{\Omega} w^m \frac{\nabla u}{Q(u)} \cdot \nabla (\hat u_{h,t} - u_t) e^{m+1}_{h,w} \, dx \, dt \nonumber  \\
& & +  \int_{t_m}^{t_{m+1}} \int_{\Omega} w^m   \nabla ( \hat u_{h,t} - u_t) \cdot \bigl(  \frac{\nabla \hat u_h}{Q(\hat u_h)} -
\frac{\nabla  u}{Q(u)} \bigr) e^{m+1}_{h,w} \, dx \, dt  =:  I+II +III. \nonumber
\end{eqnarray}
Another  application of Lemma \ref{Fest} yields
\begin{displaymath}
I  \leq  c h^2  | \log h | \int_{t_m}^{t_{m+1}}   \Vert w^m e^{m+1}_{h,w} \,  \nabla u_t \Vert_{1,1} dt 
  \leq  c \tau  h^2 | \log h | \Vert e^{m+1}_{h,w} \Vert_1.
 \end{displaymath}
 After integration by parts we obtain
\begin{eqnarray*}
II & = & \int_{t_m}^{t_{m+1}} \int_{\Omega} \nabla \cdot \bigl( w^m \frac{\nabla u}{Q(u)} \bigr) \cdot \rho_{u,t} \, e^{m+1}_{h,w} \, dx \, dt +
\int_{t_m}^{t_{m+1}} \int_{\Omega} w^m \rho_{u,t}  \frac{\nabla u}{Q(u)} \cdot \nabla e^{m+1}_{h,w} \, dx \, dt \\
& \leq & c \tau  \sup_{t_m \leq t \leq t_{m+1}} \Vert \rho_{u,t} \Vert \, \Vert e^{m+1}_{h,w}  \Vert_1 \leq c \tau  h^2 | \log h|^2 \Vert e^{m+1}_{h,w} \Vert_1
\end{eqnarray*}
by (\ref{ms3}). Next, (\ref{ms2}) and (\ref{ms4}) imply
\begin{displaymath}
III \leq c \int_{t_m}^{t_{m+1}} \Vert \nabla \rho_{u,t} \Vert \,  \Vert \nabla \rho_u \Vert_{0,\infty}  \Vert e^{m+1}_{h,w} \Vert dt  \leq c \tau  h^2 | \log h | \Vert
 e^{m+1}_{h,w} \Vert.
\end{displaymath}
If we insert the above estimates into (\ref{b25}) we obtain 
\begin{displaymath}
B_{2,5} \leq c \tau  h^2 | \log h |^2 \Vert e^{m+1}_{h,w} \Vert_1.
\end{displaymath}
In order to treat $B_{2,6}$ we write with the help of (\ref{q2})
\begin{eqnarray}
\lefteqn{
\bigl( Q(u^{m+1}_h) - Q(\hat u^{m+1}_h) \bigr) - \bigl( Q(u^m_h) - Q(\hat u^m_h) \bigr) = - \frac{\nabla \hat u^{m+1}_h}{Q(\hat u^{m+1}_h)} \cdot \nabla e^{m+1}_{h,u}
+ \frac{\nabla \hat u^m_h}{Q(\hat u^m_h)} \cdot \nabla e^m_{h,u} } \nonumber \\
&  &   +
\frac{| \nabla e^{m+1}_{h,u} |^2}{2 Q(\hat u^{m+1}_h)} - \frac{| \nabla e^m_{h,u} |^2}{2 Q(\hat u^m_h)} - \frac{\bigl( Q(u^{m+1}_h) - Q(\hat u^{m+1}_h) \bigr)^2}{2 Q(\hat u^{m+1}_h)}
+ \frac{\bigl( Q(u^m_h) - Q(\hat u^m_h) \bigr)^2}{2 Q(\hat u^m_h)} \nonumber \\
& = & - \frac{\nabla \hat u^{m+1}_h}{Q(\nabla \hat u^{m+1}_h)} \cdot \nabla (e^{m+1}_{h,u} - e^m_{h,u} \bigr) - \bigl( \frac{\nabla \hat u^{m+1}_h}{Q(\hat u^{m+1}_h)} -
\frac{\nabla \hat u^m_h}{Q(\hat u^m_h)} \bigr) \cdot \nabla e^m_{h,u}  \label{Qdif} \\
& & + \frac{1}{2} \Bigl( \frac{1}{Q(\hat u^{m+1}_h)} - \frac{1}{Q(\hat u^m_h)} \Bigr) \Bigl( | \nabla e^m_{h,u} |^2 - \bigl( Q(u^m_h) - Q(\hat u^m_h) \bigr)^2 \Bigr) \nonumber \\
& & + \frac{ \nabla (e^{m+1}_{h,u} - e^m_{h,u}) \cdot \nabla (e^{m+1}_{h,u} + \nabla e^m_{h,u})}{2 Q(\hat u^{m+1}_h)} - 
\delta_h \{ \bigl( Q(u^{m+1}_h) - Q(\hat u^{m+1}_h) \bigr) - \bigl( Q(u^m_h) - Q(\hat u^m_h) \bigr) \}, \nonumber
\end{eqnarray}
where
\begin{displaymath}
\delta_h = \frac{\bigl( Q(u^{m+1}_h)-Q(\hat u^{m+1}_h) \bigr) + \bigl( Q(u^m_h) - Q(\hat u^m_h) \bigr)}{2 Q(\hat u^{m+1}_h)}.
\end{displaymath}
We remark that (\ref{invers1}), (\ref{embound2}) and (\ref{qbound1}) imply that
\begin{equation} \label{qhest}
| \delta_h | \leq \frac{1}{2} \bigl( | \nabla e^{m+1}_{h,u}| + | \nabla e^m_{h,u} | \bigr) \leq c h^{-1} \bigl( \Vert \nabla e^{m+1}_{h,u} \Vert + \Vert \nabla e^m_{h,u} \Vert 
\bigr) \leq c | \log h |^{-\frac{1}{2}} \leq \frac{1}{2},
\end{equation}
provided that $0 < h \leq h_3$ and $h_3 \leq h_2$  is small enough. Thus, if we move the last term on the right hand side of (\ref{Qdif}) to the left hand side
and divide by $1+\delta_h \geq \frac{1}{2}$ we end up with
\begin{eqnarray*}
\lefteqn{ \hspace{-1cm}
\bigl( Q(u^{m+1}_h) - Q(\hat u^{m+1}_h) \bigr) - \bigl( Q(u^m_h) - Q(\hat u^m_h) \bigr) =
- \frac{\nabla \hat u^{m+1}_h}{Q(\nabla \hat u^{m+1}_h)} \cdot \nabla (e^{m+1}_{h,u} - e^m_{h,u} \bigr) } \\
 & & 
 + \frac{1}{1+ \delta_h} \nabla (e^{m+1}_{h,u} - e^m_{h,u}) \cdot \Bigl( \delta_h \,  \frac{\nabla \hat u^{m+1}_h}{Q(\nabla \hat u^{m+1}_h)} +
 \frac{\nabla (e^{m+1}_{h,u} + \nabla e^m_{h,u})}{2 Q(\hat u^{m+1}_h)} \Bigr) \\
 & & + \frac{1}{1+ \delta_h} \frac{Q(\hat u^m_h) - Q(\hat u^{m+1}_h)}{2 Q(\hat u^{m+1}_h) Q(\hat u^m_h)}  \Bigl( | \nabla e^m_{h,u} |^2 - \bigl( Q(u^m_h) - Q(\hat u^m_h) \bigr)^2 \Bigr) \\
 & & - \frac{1}{1+ \delta_h}  \bigl( \frac{\nabla \hat u^{m+1}_h}{Q(\hat u^{m+1}_h)} -
\frac{\nabla \hat u^m_h}{Q(\hat u^m_h)} \bigr) \cdot \nabla e^m_{h,u}  =:  S_1 + S_2 + S_3+S_4.
\end{eqnarray*}
If we insert this expression into $B_{2,6}$ we obtain
\begin{displaymath}
B_{2,6} = \sum_{i=1}^4 \int_{\Omega} w^m S_i e^{m+1}_{h,w} \, dx.
\end{displaymath}
To begin, integration by parts together with (\ref{strongbc}) yields
\begin{eqnarray*}
\int_{\Omega} w^m S_1 e^{m+1}_{h,w} \, dx  &= & \int_{\Omega} w^m e^{m+1}_{h,w} \left[  \bigl( \frac{\nabla u^{m+1}}{Q(u^{m+1})} 
- \frac{\nabla \hat u^{m+1}_h}{Q(\hat u^{m+1}_h)} \bigr) 
 - \frac{\nabla u^{m+1}}{Q(u^{m+1})} \right] \cdot \nabla (e^{m+1}_{h,u} - e^m_{h,u}) \, dx  \\
 & = &  \int_{\Omega} w^m e^{m+1}_{h,w}  \bigl( \frac{\nabla u^{m+1}}{Q(u^{m+1})} - \frac{\nabla \hat u^{m+1}_h}{Q(\hat u^{m+1}_h)} \bigr) 
  \cdot \nabla (e^{m+1}_{h,u} - e^m_{h,u}) \, dx \\
  & & + \int_{\Omega}  \nabla \cdot \bigl( \frac{ w^m e^{m+1}_{h,w} \, \nabla u^{m+1}}{Q(u^{m+1})} \bigr) (e^{m+1}_{h,u} - e^m_{h,u}) \, dx.
\end{eqnarray*}
Using Lemma \ref{Fest1} and (\ref{ms3}) for the first term we obtain
\begin{eqnarray*}
\lefteqn{ \hspace{-2.5cm}
\int_{\Omega} w^m S_1 e^{m+1}_{h,w} \, dx \leq c h | \log h | \Vert e^{m+1}_{h,u} - e^m_{h,u} \Vert \Bigl( \sum_{T \in \mathcal T_h} \Vert w^m e^{m+1}_{h,w} \Vert_{H^2(T)}^2 \Bigr)^{\frac{1}{2}} + c \Vert e^{m+1}_{h,u} - e^m_{h,u} \Vert  \Vert e^{m+1}_{h,w} \Vert_1 } \\
& \leq &  c h | \log h | \Vert  e^{m+1}_{h,w}  \Vert_1 \Vert  e^{m+1}_{h,u} - e^m_{h,u} \Vert \leq c \Vert e^{m+1}_{h,w} \Vert_1 \Vert e^{m+1}_{h,u} - e^m_{h,u} \Vert.
\end{eqnarray*}
Since $1+\delta_h \geq \frac{1}{2}$ and $| \delta_h | \leq \frac{1}{2} \bigl( | \nabla e^{m+1}_{h,u} | + | \nabla e^m_{h,u} | \bigr)$ we derive with the help of (\ref{embound2}), (\ref{qbound1}), (\ref{invers}) and (\ref{embed})
\begin{eqnarray*}
\lefteqn{
\int_{\Omega} w^m S_2 e^{m+1}_{h,w} \, dx \leq c \Vert \nabla (e^{m+1}_{h,u} - e^m_{h,u}) \Vert \bigl( \Vert \nabla e^{m+1}_{h,u} \Vert + \Vert \nabla e^m_{h,u} \Vert \bigr)
\Vert e^{m+1}_{h,w} \Vert_{0,\infty} } \\
& \leq & c h | \log h |^{-\frac{1}{2}} h^{-1} \Vert e^{m+1}_{h,u} - e^m_{h,u} \Vert \, | \log h |^{\frac{1}{2}} \Vert e^{m+1}_{h,w} \Vert_1 \leq
c \Vert e^{m+1}_{h,u} - e^m_{h,u}  \Vert \Vert e^{m+1}_{h,w} \Vert_1.
\end{eqnarray*}
Finally, we deduce with the help of (\ref{qhest}), (\ref{embed}) and (\ref{embound2})
\begin{eqnarray*}
\lefteqn{ \hspace{-1cm}
\int_{\Omega} w^m (S_3+ S_4) e^{m+1}_{h,w} \, dx \leq  c \int_{\Omega} | \nabla (\hat u^{m+1}_h - \hat u^m_h)| \bigl( | \nabla e^m_{h,u} |^2 + | \nabla e^m_{h,u} | \bigr) 
| e^{m+1}_{h,w} | \, dx }  \\
& \leq & c \Vert \nabla ( \hat u^{m+1}_h - \hat u^m_h) \Vert_{0,\infty} \Vert \nabla e^m_{h,u} \Vert \bigl( \Vert \nabla e^m_{h,u} \Vert \Vert e^{m+1}_{h,w} \Vert_{0,\infty} 
+ \Vert e^{m+1}_{h,w} \Vert  \bigr) \\
& \leq &    c \tau \Vert \nabla e^m_{h,u} \Vert \bigl(  h \Vert e^{m+1}_{h,w} \Vert_1 + \Vert e^{m+1}_{h,w} \Vert \bigr)  \leq   c \tau \Vert \nabla e^m_{h,u} \Vert \Vert e^{m+1}_{h,w} \Vert_1.
\end{eqnarray*}
Collecting the above estimates and recalling (\ref{ms3}) we obtain
\begin{displaymath}
B_{2,6}  \leq  c \Vert e^{m+1}_{h,w}  \Vert_1 \bigl( \Vert e^{m+1}_u - e^m_u \Vert + \tau h^2 | \log h |^2 + \tau \Vert \nabla e^m_{h,u} \Vert \bigr).
\end{displaymath}
If we insert the bounds for $B_{2,j}, j=1,\ldots,6$ into (\ref{b2a}) we obtain
\begin{eqnarray}
B_2 &\leq &  c \Vert e^{m+1}_{h,w} \Vert_1 \bigl( \tau^2 + \tau h^2 | \log h |^2 + \Vert e^{m+1}_u - e^m_u \Vert + \tau \Vert \nabla e^{m+1}_{h,u} \Vert +
\tau \Vert \nabla e^m_{h,u} \Vert \bigr) \label{b2}  \\
& \leq & \epsilon \tau \Vert e^{m+1}_{h,w} \Vert_1^2 + c_\epsilon \frac{1}{\tau} \Vert e^{m+1}_u - e^m_u \Vert^2
+ c_\epsilon \tau \bigl( \tau^2 + h^4 | \log h |^4  + \Vert \nabla e^{m+1}_{h,u} \Vert^2 + \Vert \nabla e^m_{h,u} \Vert^2 \bigr). \nonumber
\end{eqnarray}
(iii)  Recalling (\ref{defE}) it is not difficult to verify  that  $|E(p) - E(q) | \leq c | p-q |$ and hence
\begin{equation} \label{b3}
B_3 \leq C \tau \Vert \nabla e^{m+1}_{h,u} \Vert \Vert \nabla e^{m+1}_{h,w} \Vert \leq \epsilon \tau \Vert \nabla e^{m+1}_{h,w} \Vert^2 + c_\epsilon \tau
\Vert \nabla e^{m+1}_{h,u} \Vert^2.
\end{equation}
(iv) In view of the definition of $V^{m+1}$ and $V_h^{m+1}$ we have 
\begin{eqnarray*}
\lefteqn{ B_4  =  \int_{\Omega} \Bigl( (u^{m+1}_h - u^m_h) w^m_h \frac{\nabla u^{m+1}_h}{Q(u^{m+1}_h)} - \tau u^{m+1}_t w^{m+1} \frac{ \nabla u^{m+1}}{Q(u^{m+1})} \Bigr)
\cdot \nabla e^{m+1}_{h,w} \, dx } \\
& = & - \int_{\Omega} (e^{m+1}_u - e^m_u) w^m_h \frac{\nabla u^{m+1}_h}{Q(u^{m+1}_h)} \cdot \nabla e^{m+1}_{h,w} \, dx - \int_{\Omega} (u^{m+1}-u^m)
e^m_{w} \frac{\nabla u^{m+1}_h}{Q(u^{m+1}_h)} \cdot \nabla e^{m+1}_{h,w} \, dx \\
& & + \int_{\Omega} (u^{m+1}-u^m) w^m \bigl( \frac{\nabla u^{m+1}_h}{Q(u^{m+1}_h)} - \frac{\nabla \hat u^{m+1}_h}{Q(\hat u^{m+1}_h)} \bigr) \cdot \nabla
e^{m+1}_{h,w} \, dx \\
& & + \int_{\Omega} (u^{m+1} - u^m) w^m \bigl( \frac{\nabla \hat u^{m+1}_h}{Q(\hat u^{m+1}_h)} - \frac{\nabla u^{m+1}}{Q(u^{m+1})} \bigr)
\cdot \nabla e^{m+1}_{h,w} \, dx \\
& & + \int_{\Omega} \bigl( (u^{m+1}- u^m - \tau u^{m+1}_t ) w^m + \tau u^{m+1}_t (w^m-w^{m+1}) \bigr) \frac{\nabla u^{m+1}}{Q(u^{m+1})} \cdot \nabla e^{m+1}_{h,w} \, dx =: \sum_{i=1}^5 B_{4,i}.
\end{eqnarray*}
It follows from (\ref{whbound}) and (\ref{rmest}) that
\begin{displaymath}
B_{4,1} + B_{4,2} + B_{4,3} + B_{4,5} 
 \leq  c \Vert \nabla e^{m+1}_{h,w} \Vert \bigl( \Vert e^{m+1}_u - e^m_u \Vert + \tau \Vert e^m_w \Vert + \tau \Vert \nabla e^{m+1}_{h,u} \Vert + \tau^2 \bigr),
\end{displaymath}
while Lemma \ref{Fest1} implies
\begin{displaymath}
B_{4,4} \leq c h^2 | \log h | \Vert e^{m+1}_{h,w} \Vert_1 \Vert (u^{m+1}-u^m) w^m \Vert_2 \leq c \tau h^2 | \log h | \Vert e^{m+1}_{h,w} \Vert_1.
\end{displaymath}
In conclusion
\begin{equation} \label{b4}
B_4 \leq \epsilon \tau \Vert e^{m+1}_{h,w} \Vert_1^2 + c_\epsilon \frac{1}{\tau} \Vert e^{m+1}_u - e^m_u \Vert^2 + c_\epsilon \tau \bigl( \Vert e^m_{h,w} \Vert^2
+ \Vert \nabla e^{m+1}_{h,u} \Vert^2 + \tau^2 + h^4 | \log h |^2 \bigr).
\end{equation}
(v) Finally, in order to treat $B_5$  we recall (\ref{formg}) and note that $\alpha(\lambda r) = \lambda \alpha(r), r \in \mathbb R, \lambda>0$. As a
consequence, 
\begin{eqnarray*}
\lefteqn{
g(V^{m+1},w^{m+1}) Q(u^{m+1}) - g(V^{m+1}_h,w^m_h) Q(u^{m+1}_h) } \\
& = & \alpha( u^{m+1}_t) \bigl( \beta(w^{m+1}) - \beta(w^m_h) \bigr) + \Bigl( \alpha(u^{m+1}_t) - \alpha \bigl( \frac{u^{m+1}_h -u^m_h}{\tau} \bigr) \Bigr) \beta(w^m_h) \\
& & + \tilde \beta (w^{m+1}) \bigl( Q(u^{m+1}) - Q(u^{m+1}_h) \bigr) + \bigl( \tilde \beta(w^{m+1}) - \tilde \beta( w^m_h) \bigr) Q(u^{m+1}_h)
= \sum_{i=1}^4 S_i,
\end{eqnarray*}
so that 
\begin{displaymath}
B_5 = \sum_{i=1}^4 \tau \int_{\Omega} S_i \,  e^{m+1}_{h,w} \, dx =: \sum_{i=1}^4 B_{5,i}.
\end{displaymath}
Since $\beta, \tilde \beta \in C^{0,1}_{\mbox{\footnotesize{loc}}}(\mathbb R)$ we obtain from (\ref{whbound}), (\ref{qbound1}) and (\ref{w2}) 
\begin{eqnarray*}
| B_{5,1} | + | B_{5,4} |  & \leq &  c \tau \Vert w^{m+1}- w^m_h \Vert \Vert e^{m+1}_{h,w} \Vert \leq c \tau \bigl( \Vert w^{m+1} - w^m \Vert + \Vert \rho^m_w \Vert + \Vert e^m_{h,w}
\Vert \bigr) \Vert e^{m+1}_{h,w} \Vert \\
& \leq &  c \tau \bigl( \tau +  h^2 | \log h | + \Vert e^m_{h,w} \Vert \bigr) \Vert e^{m+1}_{h,w} \Vert.
\end{eqnarray*}
Next, we deduce with the help of the global Lipschitz continuity of $r \mapsto \alpha(r)$ and (\ref{rmest}) that
\begin{displaymath}
| B_{5,2} |  \leq   c \tau \Vert u^{m+1}_t - \frac{u^{m+1}_h - u^m_h}{\tau} \Vert \Vert e^{m+1}_{h,w} \Vert 
 \leq  c \tau^2 \Vert e^{m+1}_{h,w} \Vert + c \Vert e^{m+1}_u - e^m_u \Vert \Vert e^{m+1}_{h,w} \Vert. 
\end{displaymath}
Applying Lemma \ref{Fest} with $f=\tilde \beta(w^{m+1}) e^{m+1}_{h,w}$ we infer that
\begin{displaymath}
| B_{5,3} | \leq c \tau h^2 | \log h | \Vert \tilde \beta (w^{m+1}) e^{m+1}_{h,w} \Vert_{1,1} \leq c \tau h^2 | \log h | \Vert e^{m+1}_{h,w} \Vert_1.
\end{displaymath}
After collecting the  above estimates we obtain
\begin{equation} \label{b5}
B_5 \leq \epsilon \tau \Vert e^{m+1}_{h,w} \Vert_1^2 + c_\epsilon \frac{1}{\tau} \Vert e^{m+1}_u - e^m_u \Vert^2 + c_\epsilon \tau \bigl( \Vert e^m_{h,w} \Vert^2
+  \tau^2 + h^4 | \log h |^2 \bigr).
\end{equation}
If we insert (\ref{b1}), (\ref{b2}), (\ref{b3}), (\ref{b4}) and (\ref{b5}) into (\ref{errw2}), use Poincar\'e's inquality and observe (\ref{ellipt}) together with (\ref{qbound1}) we derive
\begin{eqnarray}
\lefteqn{ \hspace{-0.5cm}
\frac{1}{2} \int_{\Omega} (e^{m+1}_{h,w})^2 Q(u^{m+1}_h) \, dx + \tau c_0 \Vert \nabla e^{m+1}_{h,w} \Vert^2 
+ \frac{1}{2} \Vert e^{m+1}_{h,w}- e^m_{h,w} \Vert^2 } \nonumber \\
& \leq &  \frac{1}{2} \int_{\Omega} (e^m_{h,w})^2 Q(u^m_h) \, dx + \epsilon \tau \Vert \nabla e^{m+1}_{h,w} \Vert^2 + 
c_\epsilon \tau \bigl( \tau^2 + h^4 | \log h |^4  + \Vert \nabla e^{m+1}_{h,u} \Vert^2 + \Vert \nabla e^m_{h,u} \Vert^2 \bigr) \nonumber \\
& & + c_\epsilon \tau \Vert e^m_{h,w} \Vert^2 + c_\epsilon h^{-2} \frac{1}{\tau} \Vert e^{m+1}_{h,w} \Vert^2  \Vert e^{m+1}_u - e^m_u \Vert^2  
+ c_\epsilon \frac{1}{\tau} \Vert e^{m+1}_u - e^m_u \Vert^2. \label{errw3}
\end{eqnarray}
In view of (\ref{embound}), (\ref{fmp2}) and (\ref{defh0}) we have
\begin{eqnarray*} 
h^{-2}  \Vert e^{m+1}_{h,w} \Vert^2  \frac{1}{\tau} \Vert e^{m+1}_u - e^m_u \Vert^2  & \leq & 2 h^{-2}  \bigl( \Vert e^m_{h,w} \Vert^2+
\Vert e^{m+1}_{h,w} - e^m_{h,w} \Vert^2 \bigr) \frac{1}{\tau} \Vert e^{m+1}_u - e^m_u \Vert^2  \\
& \leq & 4 \frac{| \log h |^{-1}}{\beta^2}  \frac{1}{\tau} \Vert e^{m+1}_u - e^m_u \Vert^2 + c | \log h|^{-1}  \Vert e^{m+1}_{h,w} - e^m_{h,w} \Vert^2 \\
& \leq & c  \frac{1}{\tau} \Vert e^{m+1}_u - e^m_u \Vert^2 + c | \log h|^{-1}  \Vert e^{m+1}_{h,w} - e^m_{h,w} \Vert^2.
\end{eqnarray*}
Using this bound in (\ref{errw3}) and  choosing $\epsilon$ and $h_0$ sufficiently small we obtain
\begin{eqnarray}
\lefteqn{ \hspace{-1cm}
\frac{1}{2}  \int_{\Omega} (e^{m+1}_{h,w})^2 Q(u^{m+1}_h) \, dx + \tau \frac{c_0}{2}  \Vert \nabla e^{m+1}_{h,w} \Vert^2 
\leq    \frac{1}{2}  \int_{\Omega} (e^m_{h,w})^2 Q(u^m_h) \, dx + c \tau \Vert e^m_{h,w} \Vert^2 } \label{errw4} \\ 
& &  + c \tau \bigl( \tau^2 + h^4 | \log h |^4 \bigr) + c \tau \Vert \nabla ( e^{m+1}_{h,u} - e^m_{h,u}) \Vert^2 + c \tau \Vert \nabla e^m_{h,u} \Vert^2       
+  \frac{c}{\tau} \Vert e^{m+1}_u - e^m_u \Vert^2 \nonumber  \\
& \leq  & (1+ c \tau) \frac{1}{2}  \int_{\Omega} (e^m_{h,w})^2 Q(u^m_h) \, dx + c \tau \bigl( \tau^2 +  h^4 | \log h |^4 \bigr) + c \tau F^m \nonumber \\
& & + c \tau \int_{\Omega} \frac{ | \nabla (e^{m+1}_{h,u} - e^m_{h,u}) |^2}{Q(\hat u^m_h)} \, dx       
+ c  \frac{1}{2 \tau} \int_{\Omega} \frac{(e^{m+1}_u - e^m_u)^2}{Q(u^m_h)} \, dx, \nonumber
\end{eqnarray}
where we used (\ref{embound2}) and the fact that $Q(\hat u^m_h), Q(u^m_h) \leq c$ in order to derive the last estimate.
Multiplying (\ref{errw4}) by $\beta^2$ ($0< \beta \leq 1$) and adding the result to (\ref{fmp1}) we obtain with the help of our induction hypothesis (\ref{induction})
\begin{eqnarray}
\lefteqn{ F^{m+1} + \frac{\beta^2}{2} \int_{\Omega} (e^{m+1}_{h,w})^2 Q(u^{m+1}_h) \, dx  + \tau \frac{\beta^2 c_0}{2}  \Vert \nabla e^{m+1}_{h,w} \Vert^2} \nonumber  \\
& & + (1 - c \beta^2) \frac{1}{2 \tau} \int_{\Omega} \frac{(e^{m+1}_u - e^m_u)^2}{Q(u^m_h)} \, dx 
+ ( \frac{1}{16} - c \tau) \int_{\Omega} \frac{ | \nabla (e^{m+1}_{h,u} - e^m_{h,u}) |^2}{Q(\hat u^m_h)} \, dx \nonumber  \\
& \leq & (1 + \frac{c}{\beta^2} \tau) \bigl( F^m + \frac{\beta^2}{2} \int_{\Omega} (e^m_{h,w})^2 Q(u^m_h) \, dx \bigr) + c \tau  \bigl( \tau^2+  \tau h^4 | \log h |^4 \bigr) \nonumber \\
& \leq & \Bigl( 1 + c \tau \bigl( 1 + \frac{1}{\beta^2} \bigr) \Bigr) \bigl( \tau^2 +  h^4 | \log h |^4 \bigr)  e^{\mu t_m}  \leq \bigl( \tau^2 +  h^4 | \log h |^4
\bigr) e^{\mu t_{m+1}}, \label{errw5}
\end{eqnarray}
provided that 
\begin{equation} \label{cond3}
\mu \geq c (1 + \frac{1}{\beta^2}).
\end{equation}
We are now in position to specify the choice of the constants $\beta, \mu, \delta_0$ and $h_0$. To begin, choose $0< \beta \leq 1$ such that $1 - c \beta^2 \geq \frac{1}{2}$
in the second line of (\ref{errw5}). Next choose $\mu>0$ to satisfy (\ref{cond2}) and (\ref{cond3}) and then $\delta_0>0$ to satisfy (\ref{cond1}). Finally, $h_0>0$ is
fixed by (\ref{defh0}) and additional smallness conditions on $h$ that were required in the course of the calculations.

\section{Numerical Results} \label{sec:5}
\setcounter{equation}{0}

We begin this section by investigating the experimental order of convergence (eoc) of our scheme and then we display some simulations of diffusion induced grain boundary motion. Throughout the computations in this section we choose a uniform time step $\tau = h^2$. 
\subsection{Experimental order of convergence}
 We set $\Omega:=\{x\in \mathbb{R}^2 \,|\,|x|< 1\}, \; T=0.1$ and choose $f(w)=w^2$ as well as $g(V,w)=Vw$. We consider $u,w: \bar \Omega \times [0,T] \rightarrow \mathbb R$
 given by \\[2mm]
{\bf Example 1} $u(x,t)=5\sin(t)(1-|x|^2), \; w(x,t)=e^{-t}(1+|x|^2)$; \\[1.3mm]
{\bf Example 2} $u(x,t)=5\sin(t)(1+(1-|x|^2)^2); w(x,t)=e^{-t}(1+|x|^2)$ \\[2mm]
and include additional right hand sides in order for $(u,w)$ to be solutions of the corresponding PDEs, while the boundary conditions are
$u(x,t) = 0, w(x,t)=2e^{-t}$ in Example 1 and  $\frac{\partial u}{\partial n}(x,t) =0, w(x,t)=2e^{-t}$ in Example 2. 
We commence our numerical results with Figure \ref{fig:1} in which we display the solution $w_h^m$ plotted on the surface $\Gamma_h^m=\{(x,u_h^m(x))\,|\,x\in \Omega\}$, at $t^m=0$ and $t^m=0.1$, for Example 2. 
\begin{figure}[h]
\centering
\subfigure{\includegraphics[width = 0.48\textwidth]{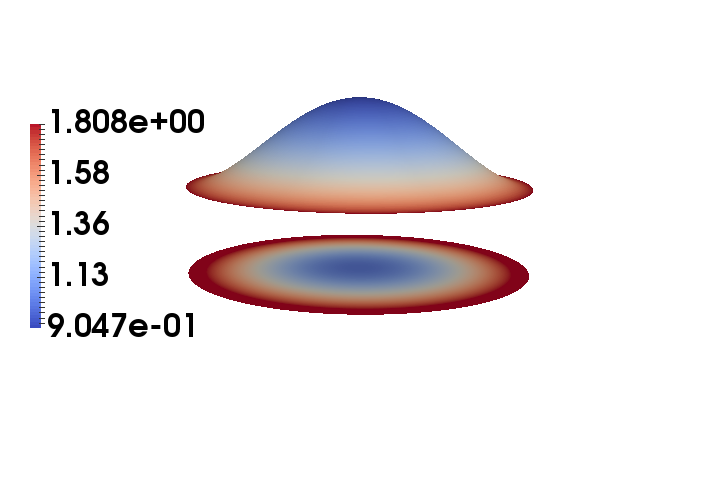}} 
\caption{{Example 2}, $w_h^m$ plotted on $\Gamma_h^m$ at $t^m=0.0$ and $t^m=0.1$.} 
\label{fig:1}
\end{figure}
\noindent
When investigating the experimental order of convergence we monitor the following errors:
$$
\mathcal{E}_1:=\max_{0 \leq m \leq M} \Vert e_w^m \Vert|^2,\;
\mathcal{E}_2:=\sum_{m=1}^M \tau \, \Vert \nabla e_w^m\Vert^2, \;
\mathcal{E}_3:=\max_{0 \leq m \leq M} \Vert e_u^m \Vert^2, \;
$$
$$
\mathcal{E}_4:=\max_{0 \leq m \leq M} \Vert \nabla e_u^m \Vert^2, \;
\mathcal{E}_5:=\sum_{m=0}^{M-1}\tau \Vert \frac{e^{m+1}_u - e^m_u}{\tau} \Vert^2.
$$
In Tables \ref{t:dir} and \ref{t:neu} we display the values of $\mathcal{E}_i$, $i=1,\ldots,5$, evaluated using a quadrature rule of degree $4$, for Example 1 and Example 2 respectively. 
For both examples we see the expected order of convergence, with eocs close to four for  $\mathcal{E}_1$, $\mathcal{E}_3$ and $\mathcal{E}_5$, and eocs close to two for $\mathcal{E}_2$ and $\mathcal{E}_3$. In particular, the  results of Example 2 confirm the bounds obtained in Theorem \ref{main}.

\begin{center}
\begin{table}[!h]
 \begin{tabular}{ |c||c|c|c|c|c|c|c|c|c|c| }
 \hline
h & $\mathcal{E}_1\times 10^2$ & $eoc_1$ & $\mathcal{E}_2\times 10^2$ & $eoc_2$ & $\mathcal{E}_4\times 10^4$ & $eoc_3$ & $\mathcal{E}_4\times 10^2$ & $eoc_4$ & $\mathcal{E}_5\times 10^4$ & $eoc_5$ \\ 
 \hline
 \hline
0.1961 & 29.33177 & - & 64.7340 & - & 81.45108 & - & 35.3598 & - & 9.88350 & -   \\ 
0.0996 & 0.06181 & 9.10 & 0.5881 & 6.94 & 0.16291 & 9.18 & 1.1420 & 5.07& 0.04050 & 8.12  \\ 
0.0538 & 0.00514 & 4.04 & 0.1006 & 2.87 & 0.00668 & 5.19 & 0.2347 & 2.57 & 0.00279 & 4.34  \\ 
0.0269 & 0.00032 & 3.99 & 0.0217 & 2.21 & 0.00037 & 4.16 & 0.0574 & 2.03 & 0.00018 & 4.00 \\ 
0.0135 & 0.00002 & 4.00 & 0.0052 & 2.06 & 0.00002 & 4.07 & 0.0142 & 2.02 & 0.00001 & 4.01 \\ \hline
\end{tabular}
\caption{{Example 1}, the errors, $\mathcal{E}_i$, together with their associated estimated order of convergence, $eoc_i$, for $i=1,2,3,4,5$.} 
\label{t:dir}
\end{table}
\end{center}

\begin{center}
\begin{table}[!h]
 \begin{tabular}{ |c||c|c|c|c|c|c|c|c|c|c| }
 \hline
h & $\mathcal{E}_1\times 10^2$ & $eoc_1$ & $\mathcal{E}_2\times 10^2$ & $eoc_2$ & $\mathcal{E}_4\times 10^4$ & $eoc_3$ & $\mathcal{E}_4\times 10^2$ & $eoc_4$ & $\mathcal{E}_5\times 10^4$ & $eoc_5$ \\ 
 \hline
 \hline
0.1961 & 38.25517 & - & 152.3575 & - & 99.40560 & - & 101.7628 & - & 38.94582 & -   \\ 
0.0996 & 0.27386 & 7.29 & 1.0056 & 7.41 & 1.72052 & 5.99 & 4.4905 & 4.61& 0.23208 & 7.56  \\ 
0.0538 & 0.02398 & 3.96 & 0.1325 & 3.29 & 0.10679 & 4.51 & 0.8869 & 2.63 & 0.01604 & 4.34  \\ 
0.0269 & 0.00153 & 3.97 & 0.0237 & 2.48 & 0.00652 & 4.03 & 0.2105 & 2.08 & 0.00099 & 4.01 \\ 
0.0135 & 0.00011 & 3.82 & 0.0054 & 2.15 & 0.00041 & 3.98 & 0.0515 & 2.03 & 0.00006 & 4.04 \\ \hline
\end{tabular}
\caption{{Example 2}, the errors, $\mathcal{E}_i$, together with their associated estimated order of convergence, $eoc_i$, for $i=1,2,3,4,5$. }
\label{t:neu}
\end{table}
\end{center}


\subsection{Non--orthogonal boundary contact}
Even though we have restricted our error analysis to the case where the evolving surface meets the boundary of the cylinder at a right angle, it is not difficult to apply our approach to the case where it meets the boundary of the cylinder at a given angle $\alpha$. In order to do so, we replace the boundary condition (\ref{bc1}) with
$$
\nu \cdot \nu_{\partial A}   = \cos(\alpha) \qquad  \mbox{on } \partial \Gamma(t), \quad t \in (0,T], \label{bca}
$$
leading to the following boundary condition for the height function $u$:
$$
\frac{ \nabla u \cdot n}{Q(u)} = -\cos(\alpha) \quad \mbox{ on } \partial \Omega \times (0,T].
$$
The weak formulation for $u$ then takes the  form
$$
\int_{\Omega} \frac{u_t \, \varphi}{Q(u)} \,  dx + \int_{\Omega} \frac{\nabla u \cdot \nabla \varphi}{Q(u)} \,  dx = \int_{\Omega} f(w) \, \varphi \, dx -  \int_{\partial\Omega} \cos(\alpha)\varphi \, dx
\qquad \forall \varphi \in H^1(\Omega),
$$
from which we derive the  corresponding finite element approximation replacing (\ref{disc1}). \\
We set $\Omega:=\{x\in \mathbb{R}^2\,|\,|x|< 1\}$, $f(w)=w$ and $g(V,w)=|V|w$ and specify the following  boundary conditions for $u$ and $w$:  
$$
\frac{\nabla u  \cdot n}{Q(u)} = -\cos(2\pi t -\pi/2)~~\mbox{and}~~w=1 \quad \mbox{ on }  \partial \Omega \times (0,T].
$$ 
As initial data we choose $u^0(x)=0$ and $w^0(x)=\frac12(1+|x|^2)$. In Figure \ref{f:angle} we display $w_h^m$ on the 
surface $\Gamma_h^m=\{(x,u_h^m(x))\,|\,x\in \Omega\}$ at $t^m=0,0.25, 0.35, 0.5,  0.65,0.75$. As $| \cos(2 \pi t - \pi/2) | =1$ for $t=0.25, 0.75$, the gradient of $u$ will blow up
on the boundary. However, for the mesh sizes we chose the discrete solution
was able to flow through these singularities without problems. 

\begin{figure}[h]
\centering
$t=0$\hspace{4cm} $t=0.25$\hspace{4cm} $t=0.35$\\
\subfigure{\includegraphics[width = 0.32\textwidth]{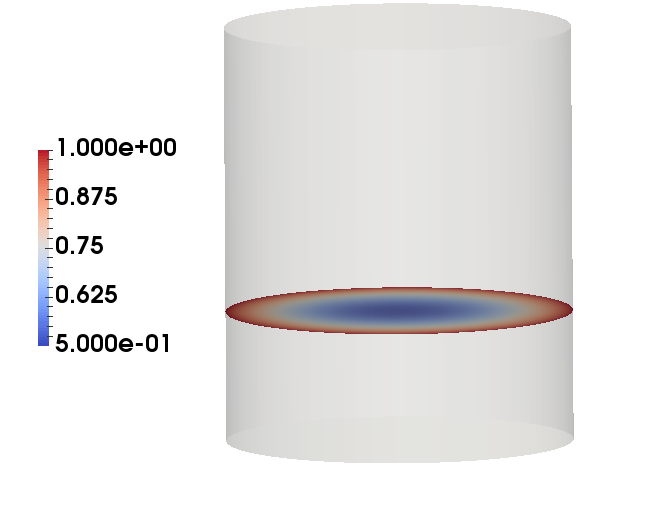}} 
\subfigure{\includegraphics[width = 0.32\textwidth]{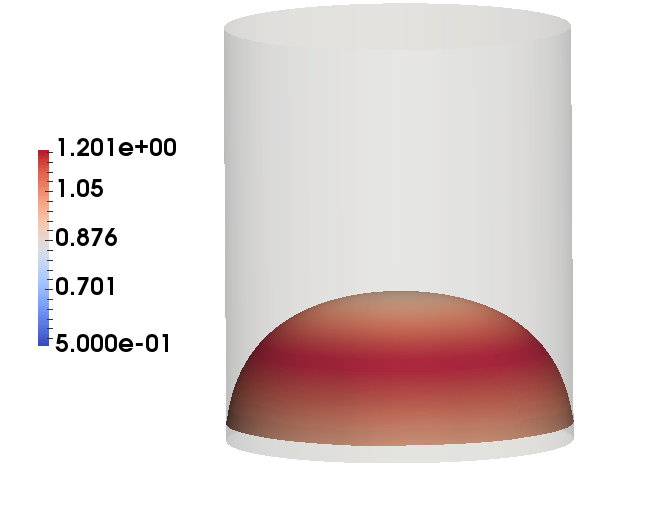}} 
\subfigure{\includegraphics[width = 0.32\textwidth]{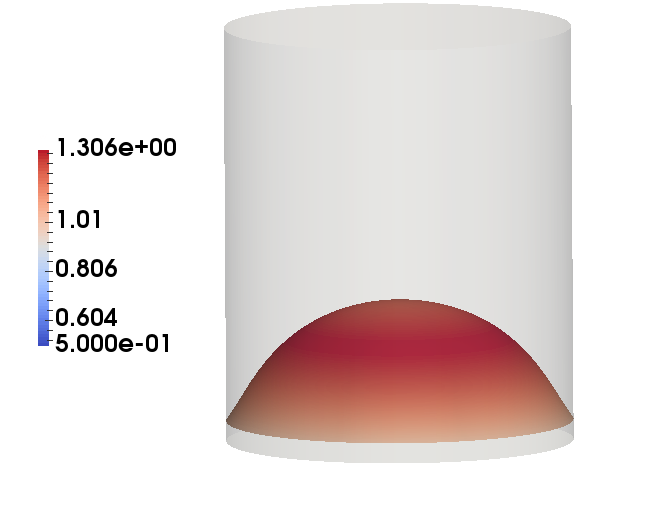}} \\[4mm]
$t=0.5$\hspace{4cm} $t=0.65$\hspace{4cm} $t=0.75$\\
\subfigure{\includegraphics[width = 0.32\textwidth]{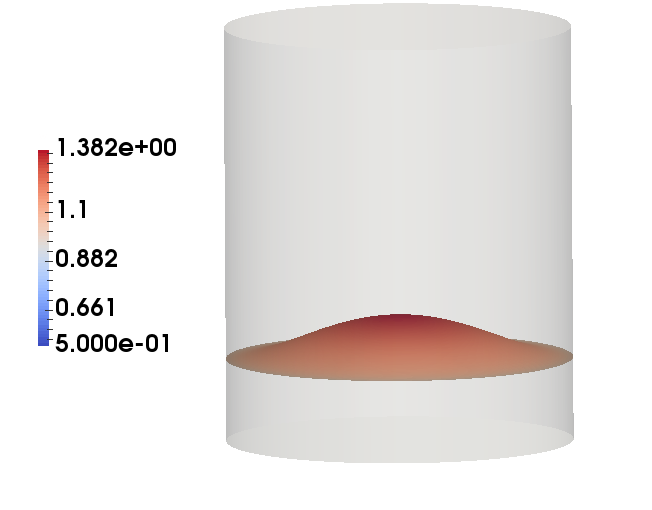}} 
\subfigure{\includegraphics[width = 0.32\textwidth]{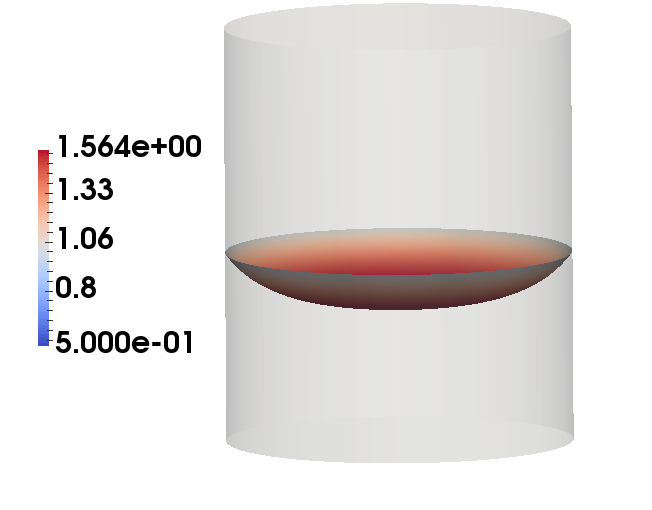}} 
\subfigure{\includegraphics[width = 0.32\textwidth]{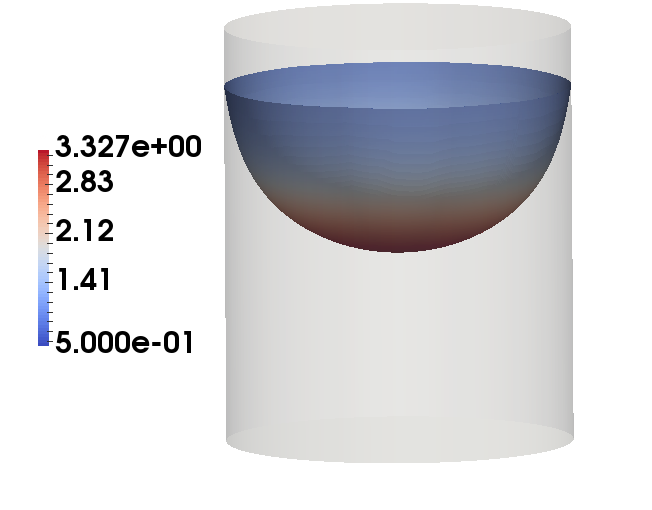}} \\
\caption{$w_h^m$ plotted on the surface $\Gamma_h^m$ at $t^m=0,0.25, 0.35, 0.5,  0.65,0.75$.}
\label{f:angle}
\end{figure}


\subsection{Simulations of diffusion induced grain boundary motion} \label{digm}
We conclude our numerical results with two simulations of diffusion induced grain boundary motion. We consider the physical set-up of a film of metal, containing a single grain boundary. We denote the film by $A=\Omega\times[0,5]\subset \mathbb{R}^3$, with $\Omega = (-2,2)^2$, and we model the grain boundary by the surface $\Gamma(t)=\{(x,u(x,t))\,|\,x\in \Omega\}$. We impose the boundary condition
$$
\frac{\partial u}{\partial n}(x,t)=0~~~~\forall\,(x,t) \in \partial\Omega \times (0,T]
$$
such that the grain boundary meets the boundaries of the film orthogonally.
The film is immersed in a solute that diffuses into the grain boundary at the surfaces $x_1=\pm2$. We denote the concentration of the solute on the grain boundary by $w(x,t)\in[0,1]$, for $x\in \Omega$, and we assume that the solute concentration is set to $1$ on the surfaces $x_1=\pm2$ and satisfies zero flux boundary conditions at the surfaces $x_2=\pm 2$, i.e.
$$
w(x,t)=1~~\mbox{for}~x_1=\pm 2,~~\frac{\partial w}{\partial n}(x,t)=0~~\mbox{for}~x_2=\pm 2.
$$
We consider two initial configurations for the grain boundary, in the first we take the grain boundary to be the planar surface $x_3=1$ such that $u^0(x)=1$, while in the second we take 
\begin{equation}
u^0(x_1,x_2)=\left\{\begin{array}{cll}
1+\varepsilon&\mbox{if}&\,x_1 > \frac{\pi\varepsilon}{2} \\
\varepsilon \sin\left(\frac{x_1}{\varepsilon}\right)&\mbox{if}&|x_1| \leq   \frac{\pi\varepsilon}{2} \\
1-\varepsilon&\mbox{if}&\,x_1 < -  \frac{\pi\varepsilon}{2}\end{array}\right.
\label{eq:ic2}
\end{equation}
with $\varepsilon = 0.4$. For both configurations we assume that the concentration of solute on the grain boundary is initially zero, such that $w^0(x)=0$ for $x\in \Omega$.
In this set-up physically meaningful choices for $f(w)$ and $g(V,w)$ are $f(w)=w^2$ and $g(V,w)=|V|\,w$. 
Figure \ref{f:digm1} displays the solute concentration, $w_h^m(x)$, plotted on the grain boundary, $\Gamma_h^m=\{(x,u_h^m(x))\,|\,x\in \Omega\}$, at times $t^m=0,0.1,0.2,0.3$. In addition in each plot we display the initial grain boundary, depicted by the blue surface, and the outline of the metallic film $A=\Omega\times[0,5]$. The symmetry of this set-up  makes it equatable to the two-dimensional configurations studied in \cite{DES01} and \cite{SVY}. In particular we see a travelling wave solution comparable to the ones displayed in Figures 9 and 10 of \cite{DES01} and Figure 4.4 of \cite{SVY}. In Figure \ref{f:digm2} the initial surface is defined by (\ref{eq:ic2}) which gives rise to a fully three-dimensional simulation. We display the solute concentration, 
$w^m_h(x)$, plotted on the grain boundary, at times $t^m=0,0.2,0.4,0.6$, together with the initial grain boundary and the outline of the film. 


\begin{figure}[h]
\centering
$t=0$\hspace{5cm} $t=0.1$\\
\subfigure{\includegraphics[width = 0.4\textwidth]{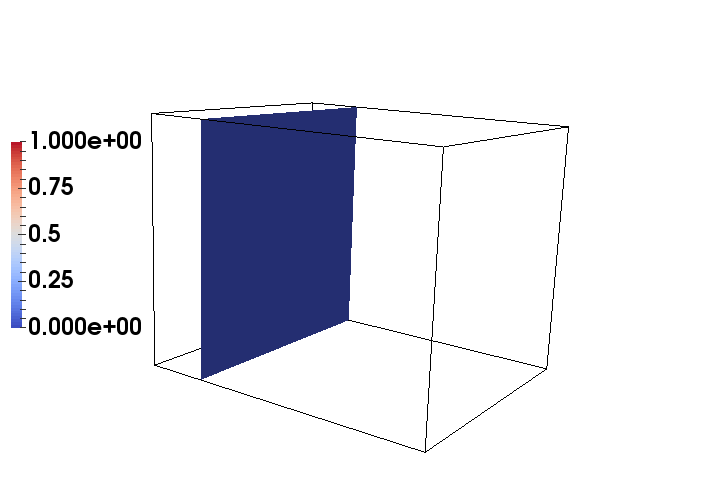}} 
\subfigure{\includegraphics[width = 0.4\textwidth]{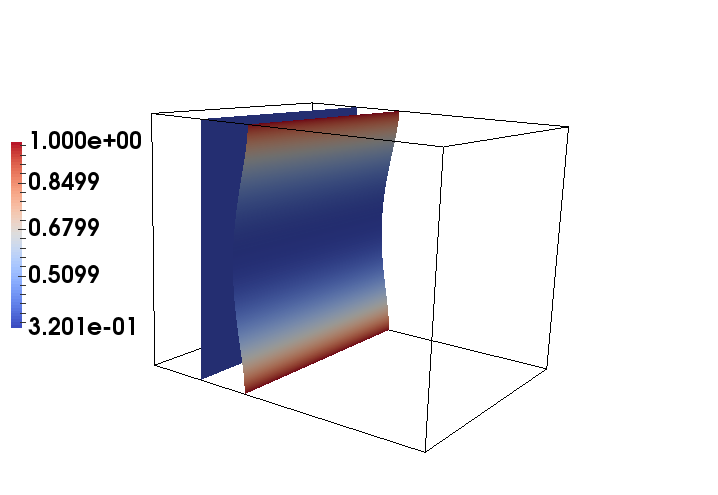}} \\
$t=0.2$\hspace{5cm} $t=0.3$\\
\subfigure{\includegraphics[width = 0.4\textwidth]{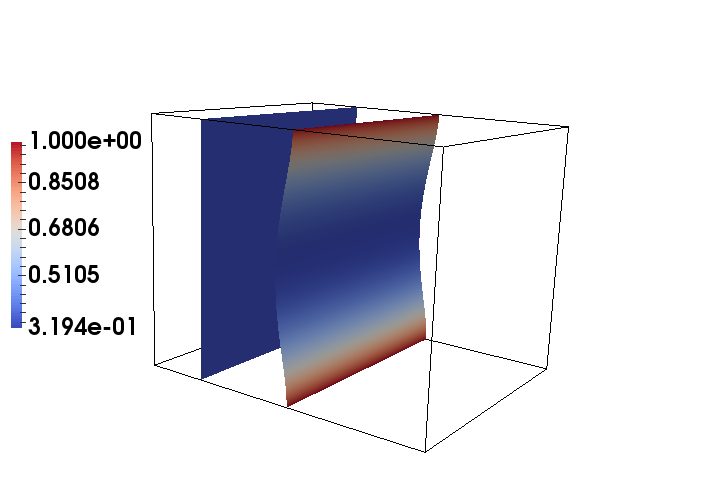}} 
\subfigure{\includegraphics[width = 0.4\textwidth]{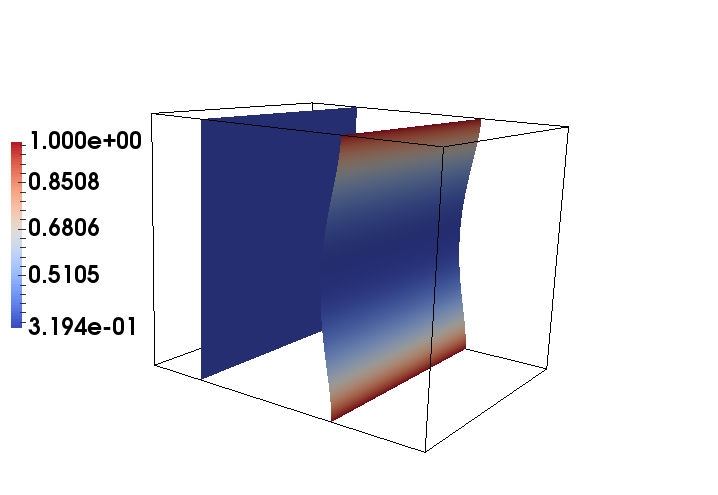}} \\
\caption{Travelling wave solution showing the grain boundary with the solute concentration at  $t^m=0,0.1,0.2,0.3$, with $u_h^0\equiv1$ and $w_h^0 \equiv 0$.}
\label{f:digm1}
\end{figure}

\begin{figure}[h]
\centering
$t=0$\hspace{5cm} $t=0.2$\\
\subfigure{\includegraphics[width = 0.4\textwidth]{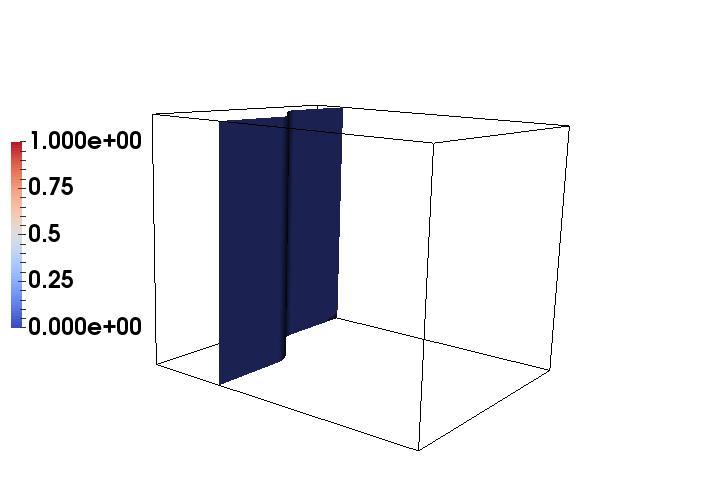}} 
\subfigure{\includegraphics[width = 0.4\textwidth]{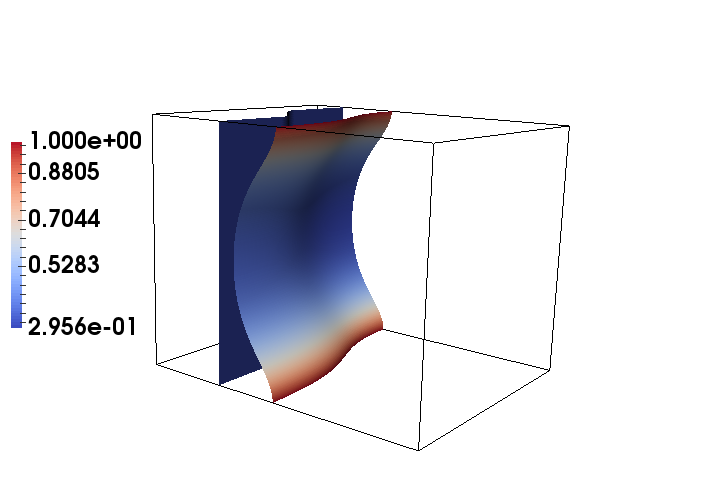}} \\
$t=0.4$\hspace{5cm} $t=0.6$\\
\subfigure{\includegraphics[width = 0.4\textwidth]{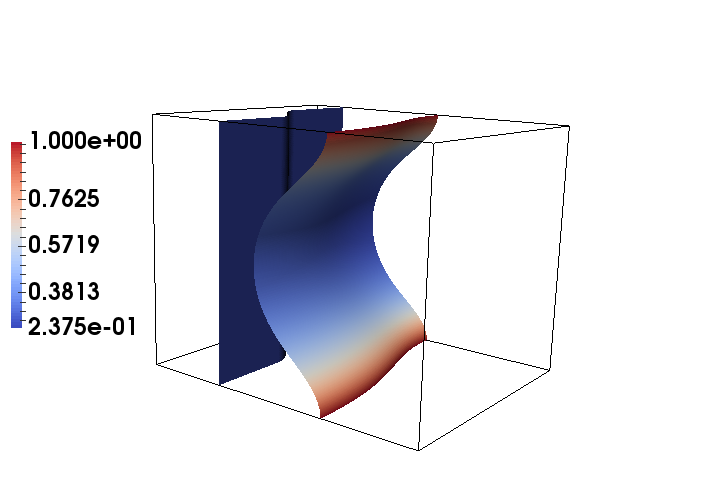}} 
\subfigure{\includegraphics[width = 0.4\textwidth]{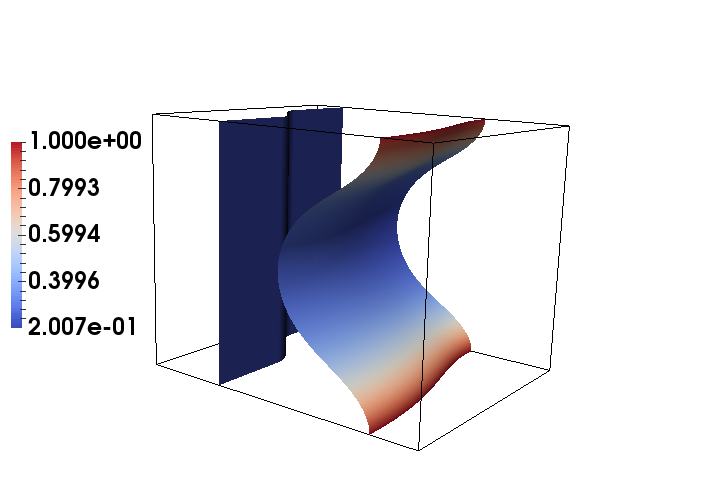}} 
\caption{Evolving grain boundary with the solute concentration at $t^m=0,0.2,0.4,0.6$, with the initial surface defined by (\ref{eq:ic2}) and $w_h^0 \equiv 0$.}
\label{f:digm2}
\end{figure}


\bibliographystyle{plain}


\begin{thebibliography}{10}
\bibitem{BDS17} Barrett, J.W., Deckelnick, K., Styles, V.: {\it Numerical analysis for a system coupling curve evolution to reaction-diffusion on the curve.}
SIAM J. Numer. Anal. {\bf 55}, No. 2,  1080-1100 (2017). 
\bibitem{DD95} Deckelnick, K., Dziuk, G.: {\it Convergence of a finite element method for non--parametric mean curvature flow.} Numer. Math. {\bf 72}, 197--222 (1995).
\bibitem{DD00} Deckelnick, K., Dziuk, G.: {\it Error estimates for a semi--implicit fully discrete finite element scheme for the mean curvature flow of graphs.}
Interfaces and Free Boundaries {\bf 2}, 341--359 (2000).
\bibitem{DD06} Deckelnick, K., Dziuk, G.: {\it Error analysis of a finite element method for the Willmore flow of graphs.} Interfaces and Free Boundaries {\bf 8}, 21--46 (2006).
\bibitem{DES01} Deckelnick, K., Elliott, C.M., Styles, V.: {\it Numerical diffusion--induced grain boundary motion.} Interfaces and Free Boundaries {\bf 3}, 393--414 (2001). 
\bibitem{DR80} Dobrowolski, M., Rannacher, R.: {\it Finite element methods for nonlinear elliptic systems of second order.} Math. Nachr. {\bf 94}, 155--172 (1980).
\bibitem{DzE13}  Dziuk, G., Elliott, C.M.: {\it Finite element methods for surface PDEs.} Acta Numer. {\bf 22}, 289--396 (2013).
\bibitem{FCE01} Fife, P., Cahn, J., Elliott, C.M.: {\it A free--boundary model for diffusion--induced grain boundary motion.} Interfaces and Free Boundaries {\bf 3}, 291--336 (2001). 
\bibitem{KLLP17} Kov\'{a}cs, B., Li, B., Lubich, C., Power Guerra, C.A.: {\it Convergence of finite elements on an evolving surface driven by diffusion on the surface.}
Numer. Math. {\bf 137}, 643-–689 (2017). 
\bibitem{KLL19}  Kov\'{a}cs, B., Li, B., Lubich, C.: {\it A convergent evolving finite element algorithm for mean curvature flow of closed surfaces.} Numer. Math. {\bf 143},
797--853 (2019). 
\bibitem{KLL20}  Kov\'{a}cs, B., Li, B., Lubich, C.: {\it A convergent algorithm for forced mean curvature flow driven by diffusion on the surface.} 
Interfaces and Free Boundaries {\bf 22}, 443–-464 (2020). 
\bibitem{PS17} Pozzi, P., Stinner, B.: {\it Curve shortening flow coupled to lateral diffusion.} Numer. Math. {\bf 135}, 1171--1205 (2017).
\bibitem{SVY} Styles, V., Van Yperen, J.: {\it Numerical analysis for a system coupling curve evolution attached orthogonally to a fixed boundary, to a reaction-diffusion equation on the curve.}
arXiv:2003.06910 (2020).
\bibitem{Z73} Zlamal, M.: {\it Curved elements in the finite element method. Part I.} SIAM J. Numer. Anal. {\bf 10}, 229--240 (1973).
\end{thebibliography}

\end{document}